\let\origsection=\section \def\section{\@ifstar{\origsection*}{\mysection}} 
\def\mysection{\@startsection{section}{1}\z@{.7\linespacing\@plus\linespacing}{.5\linespacing}{\normalfont\scshape\centering\S}}
\renewcommand{\PrintDOI}[1]{\doi{#1}}
\def\rmlabel{\upshape({\itshape \roman*\,})}
\def\alabel{\upshape({\itshape \alph*\,})}
\def\nlabel{\upshape({\itshape \arabic*\,})}
\let\polishlcross=\l
\def\l{\ifmmode\ell\else\polishlcross\fi}
\def\qand{\quad\text{and}\quad}
\def\qqand{\qquad\text{and}\qquad}
\let\emptyset=\varnothing
\let\setminus=\smallsetminus
\def\moverlay{\mathpalette\mov@rlay}
\def\mov@rlay#1#2{\leavevmode\vtop{%
   \baselineskip\z@skip \lineskiplimit-\maxdimen
   \ialign{\hfil$\m@th#1##$\hfil\cr#2\crcr}}}
\newcommand{\charfusion}[3][\mathord]{
    #1{\ifx#1\mathop\vphantom{#2}\fi
        \mathpalette\mov@rlay{#2\cr#3}
      }
    \ifx#1\mathop\expandafter\displaylimits\fi}
\newcommand{\dcup}{\charfusion[\mathbin]{\cup}{\cdot}}
\newlength{\myscaledsize}
\newcommand{\vdeg}{	\setlength{\myscaledsize}{\the\fontdimen6\font}
	\hspace*{-0.08em}\resizebox{0.5\myscaledsize}{0.4\myscaledsize}{
		\tikz{
									\draw[black] (0,0) circle (.2);
			\draw[black] (1,0) circle (.2);
			\draw[black,fill=black] (0.5,0.86) circle (.2); 
		}}
}
\newcommand{\VDEG}{	\setlength{\myscaledsize}{\the\fontdimen6\font}
	\hspace*{-0.0444em}\raisebox{-0.1\myscaledsize}{\resizebox{0.75\myscaledsize}{0.6\myscaledsize}{
		\tikz{
									\draw[black] (0,0) circle (.2);
			\draw[black] (1,0) circle (.2);
			\draw[black,fill=black] (0.5,0.86) circle (.2); 
		}}}
		\hspace{0.1em}
}
\newcommand{\pivdeg}{\pi_{\vdeg}}
\newcommand{\vv}{	\setlength{\myscaledsize}{\the\fontdimen6\font}
	\hspace*{-0.08em}\resizebox{0.5\myscaledsize}{0.4\myscaledsize}{
		\tikz{
			\draw[black,fill=black] (0,0) circle (.2);
			\draw[black,fill=black] (1,0) circle (.2);
						\draw[black] (0.5,0.86) circle (.2);
		}}
}
\newcommand{\VV}{	\setlength{\myscaledsize}{\the\fontdimen6\font}
	\hspace*{-0.0444em}\raisebox{-0.1\myscaledsize}{\resizebox{0.75\myscaledsize}{0.6\myscaledsize}{
		\tikz{
			\draw[black,fill=black] (0,0) circle (.2);
			\draw[black,fill=black] (1,0) circle (.2);
						\draw[black] (0.5,0.86) circle (.2);
		}}}
		\hspace{0.1em}
}
\newcommand{\pivv}{\pi_{\vv}}
\newcommand{\vvv}{	\setlength{\myscaledsize}{\the\fontdimen6\font}
	\hspace*{-0.08em}\resizebox{0.5\myscaledsize}{0.4\myscaledsize}{
		\tikz{
			\draw[black,fill=black] (0,0) circle (.2);
			\draw[black,fill=black] (1,0) circle (.2);
			\draw[black,fill=black] (0.5,0.86) circle (.2);}}
}
\newcommand{\VVV}{	\setlength{\myscaledsize}{\the\fontdimen6\font}
	\hspace*{-0.0444em}\raisebox{-0.1\myscaledsize}{\resizebox{0.75\myscaledsize}{0.6\myscaledsize}{
		\tikz{
			\draw[black,fill=black] (0,0) circle (.2);
			\draw[black,fill=black] (1,0) circle (.2);
			\draw[black,fill=black] (0.5,0.86) circle (.2);}}}
			\hspace{0.1em}
}
\newcommand{\pivvv}{\pi_{\vvv}}
\newcommand{\epair}{	\setlength{\myscaledsize}{\the\fontdimen6\font}
	\hspace*{-0.08em}\resizebox{0.5\myscaledsize}{0.4\myscaledsize}{
		\tikz{
			\draw[black,fill=black] (0,0) circle (.2);
			\draw[black,fill=black] (1,0) circle (.2);
						\draw[black] (0.5,0.86) circle (.2);
			\draw[black,line width=4pt ](0,0)--(1,0);}}
}
\newcommand{\EPAIR}{	\setlength{\myscaledsize}{\the\fontdimen6\font}
	\hspace*{-0.0444em}\raisebox{-0.1\myscaledsize}{\resizebox{0.75\myscaledsize}{0.6\myscaledsize}{
		\tikz{
			\draw[black,fill=black] (0,0) circle (.2);
			\draw[black,fill=black] (1,0) circle (.2);
						\draw[black] (0.5,0.86) circle (.2);
			\draw[black,line width=4pt ](0,0)--(1,0);}}}
			\hspace{0.1em}
}
\newcommand{\piepair}{\pi_{\epair}}
\newcommand{\ev}{	\setlength{\myscaledsize}{\the\fontdimen6\font}
	\hspace*{-0.08em}\resizebox{0.5\myscaledsize}{0.4\myscaledsize}{
		\tikz{
			\draw[black,fill=black] (0,0) circle (.2);
			\draw[black,fill=black] (1,0) circle (.2);
			\draw[black,fill=black] (0.5,0.86) circle (.2);
			\draw[black,line width=4pt ](0,0)--(1,0);}}
}
\newcommand{\EV}{	\setlength{\myscaledsize}{\the\fontdimen6\font}
	\hspace*{-0.0444em}\raisebox{-0.1\myscaledsize}{\resizebox{0.75\myscaledsize}{0.6\myscaledsize}{
		\tikz{
			\draw[black,fill=black] (0,0) circle (.2);
			\draw[black,fill=black] (1,0) circle (.2);
			\draw[black,fill=black] (0.5,0.86) circle (.2);
			\draw[black,line width=4pt ](0,0)--(1,0);}}}
			\hspace{0.1em}
}
\newcommand{\piev}{\pi_{\ev}}
\let\pilp=\piev
\let\LP=\EV
\newcommand{\ee}{	\setlength{\myscaledsize}{\the\fontdimen6\font}
	\hspace*{-0.08em}\resizebox{0.5\myscaledsize}{0.4\myscaledsize}{
		\tikz{
			\draw[black,fill=black] (0,0) circle (.2);
			\draw[black,fill=black] (1,0) circle (.2);
			\draw[black,fill=black] (0.5,0.86) circle (.2);
			\draw[black,line width=4pt ](0,0)--(0.5,0.86);
			\draw[black,line width=4pt ](1,0)--(0.5,0.86);}}
}
\newcommand{\EE}{	\setlength{\myscaledsize}{\the\fontdimen6\font}
	\hspace*{-0.0444em}\raisebox{-0.1\myscaledsize}{\resizebox{0.75\myscaledsize}{0.6\myscaledsize}{
		\tikz{
			\draw[black,fill=black] (0,0) circle (.2);
			\draw[black,fill=black] (1,0) circle (.2);
			\draw[black,fill=black] (0.5,0.86) circle (.2);
			\draw[black,line width=4pt ](0,0)--(0.5,0.86);
			\draw[black,line width=4pt ](1,0)--(0.5,0.86);}}}
			\hspace{0.1em}
}
\newcommand{\piee}{\pi_{\ee}}
\let\pill=\piee
\let\QLL=\EE
\newcommand{\eee}{	\setlength{\myscaledsize}{\the\fontdimen6\font}
	\hspace*{-0.08em}\resizebox{0.5\myscaledsize}{0.4\myscaledsize}{
		\tikz{
			\draw[black,fill=black] (0,0) circle (.2);
			\draw[black,fill=black] (1,0) circle (.2);
			\draw[black,fill=black] (0.5,0.86) circle (.2);
			\draw[black,line width=4pt ](0,0)--(1,0);
			\draw[black,line width=4pt ](0,0)--(0.5,0.86);
			\draw[black,line width=4pt ](1,0)--(0.5,0.86);}}
}
\newcommand{\EEE}{	\setlength{\myscaledsize}{\the\fontdimen6\font}
	\hspace*{-0.0444em}\raisebox{-0.1\myscaledsize}{\resizebox{0.75\myscaledsize}{0.6\myscaledsize}{
		\tikz{
			\draw[black,fill=black] (0,0) circle (.2);
			\draw[black,fill=black] (1,0) circle (.2);
			\draw[black,fill=black] (0.5,0.86) circle (.2);
			\draw[black,line width=4pt ](0,0)--(1,0);
			\draw[black,line width=4pt ](0,0)--(0.5,0.86);
			\draw[black,line width=4pt ](1,0)--(0.5,0.86);}}}
			\hspace{0.1em}
}
\let\pilll=\pieee
\let\LLL=\EEE
\theoremstyle{plain}
\newtheorem{thm}{Theorem}[section]
\newtheorem{lemma}[thm]{Lemma}
\theoremstyle{definition}
\newtheorem{dfn}[thm]{Definition}
\newtheorem{exmp}[thm]{Example}
\newtheorem{quest}{Question}[section]
\let\eps=\varepsilon
\let\epsilon=\varepsilon
\let\theta=\vartheta
\let\rho=\varrho
\let\phi=\varphi
\def\NN{\mathds N}
\def\RR{\mathds R}
\def\cA{{\mathcal A}}
\def\cP{{\mathcal P}}
\def\cQ{{\mathcal Q}}
\def\cK{{\mathcal K}}
\def\ccA{{\mathscr{A}}}
\DeclareMathOperator{\ex}{ex}
\DeclareMathOperator{\codeg}{codeg}
\def\bl{\bigl(}
\def\br{\bigr)}
\let\setminus=\smallsetminus
\let\emptyset=\varnothing
\begin{document}
\title[Embedding tetrahedra into quasirandom hypergraphs]{Embedding tetrahedra into quasirandom hypergraphs}

\author[Christian Reiher]{Christian Reiher}
\address{Fachbereich Mathematik, Universit\"at Hamburg, Hamburg, Germany}
\email{Christian.Reiher@uni-hamburg.de}

\author[Vojt\v{e}ch R\"{o}dl]{Vojt\v{e}ch R\"{o}dl}
\address{Department of Mathematics and Computer Science, 
Emory University, Atlanta, USA}
\email{rodl@mathcs.emory.edu}
\thanks{The second author was supported by NSF grants DMS 1301698 and 1102086.}

\author[Mathias Schacht]{Mathias Schacht}
\address{Fachbereich Mathematik, Universit\"at Hamburg, Hamburg, Germany}
\email{schacht@math.uni-hamburg.de}
\thanks{The third author was supported through the Heisenberg-Programme of the DFG}

\keywords{quasirandom hypergraphs, extremal graph theory, Tur\'an's problem, tetrahedron}
\subjclass[2010]{05C35 (primary), 05C65, 05C80 (secondary)}
\begin{abstract}
We investigate extremal problems for quasirandom hypergraphs. We say 
that a $3$-uniform hypergraph $H=(V,E)$ is
\emph{$(d,\eta,\LP)$-quasirandom}
 if for any subset 
$X\subseteq V$ and every set of pairs $P\subseteq V\times V$
the number of pairs $(x,(y,z))\in X\times P$ with
$\{x,y,z\}$ being a hyperedge of $H$ is in the interval $d\,|X|\,|P|\pm\eta\,|V|^3$. 
We show that for any $\eps>0$ there exists  $\eta>0$ such that 
every sufficiently large $(1/2+\eps,\eta,\LP)$-quasirandom hypergraph 
contains a tetrahedron, i.e., four vertices spanning all four hyperedges.
A known random construction shows that the density $1/2$ is best possible.
This result is closely related to a question of Erd\H os, whether every weakly quasirandom 
$3$-uniform hypergraph~$H$ with density bigger than $1/2$, 
i.e., every large subset of vertices induces a hypergraph with density bigger than~$1/2$, contains a tetrahedron. 
\end{abstract} 

\maketitle

\section{Introduction}  \label{sec:intro}
\subsection{Extremal problems for graphs and hypergraphs} Given a fixed graph $F$ a typical problem in extremal graph theory asks for the maximum number of edges that a (large) graph~$G$ on $n$ vertices containing no copy of $F$ can have. More formally, for a fixed graph~$F$ let the \emph{extremal number $\ex(n, F)$} be the number $|E|$ 
of edges of an  $F$-free graph~$G=(V,E)$ on $|V|=n$ vertices with the maximum number of edges. It is well known and not hard to observe that the sequence 
$
\ex(n, F)/\binom{n}{2}
$
is decreasing. Consequently, one may define the \emph{Tur\'an density}
\[
\pi(F)=\lim_{n\to\infty}  \frac{\ex(n, F)}{\binom{n}{2}},
\]
which describes the maximum density of large $F$-free graphs. The systematic study of these extremal parameters was initiated 
by Tur\'an~\cite{Tu41}, who determined $\ex(n,K_k)$ for complete graphs $K_k$. 
Thanks to his work and the results from \cite{ErSt46} by Erd\H{o}s and Stone it is known that 
the Tur\'an density of any graph $F$ with at least one edge can be explicitly computed using the 
formula
\begin{equation}\label{ES-formula}
\pi(F)=\frac{\chi(F)-2}{\chi(F)-1}\,.
\end{equation}

Already in his original work~\cite{Tu41} Tur\'an asked for hypergraph extensions of these extremal problems.
We restrict ourselves to \emph{$3$-uniform hypergraphs $H=(V,E)$}, where $V=V(H)$ is a finite set of \emph{vertices}
and the set of \emph{hyperedges} $E=E(H)\subseteq \binom{V}{3}$ is a collection of $3$-element sets of vertices. We shall only consider graphs and $3$-uniform hypergraphs and when we are referring  simply to a 
hypergraph we will always mean a $3$-uniform hypergraph. Moreover, for a simpler notation in the context of edges $\{i,j\}$ and 
hyperedges $\{i,j,k\}$ we omit the parentheses and just write $ij$ or $ijk$. In particular, $ijk$ denotes
an unordered triple, while for ordered triples we stick to the standard notation $(i,j,k)$.

Despite considerable effort
no formula similar to \eqref{ES-formula} is known or conjectured to hold for 
general $3$-uniform hypergraphs $F$. 
Determining the value of $\pi(F)$ is a well known and hard problem even 
for ``simple'' hypergraphs like the complete $3$-uniform hypergraph~$K_4^{(3)}$ on four vertices, which 
is also called the \emph{tetrahedron}. Currently the best known bounds for its Tur\'an density are 
\[
	\frac{5}{9}\leq\pi(K_4^{(3)})\leq 0.5616\,,
\] 
where the lower bounds is given by what is believed to be an optimal construction due to Tur\'an 
(see, e.g.,~\cite{Er77}). The upper bound is due to Razborov~\cite{Ra10} 
and the proof is based on the \emph{flag algebra method} introduced by Razborov~\cite{Ra07}. For a thorough discussion of Tur\'an type 
results and problems for hypergraphs we refer to the recent survey of Keevash~\cite{Ke11}.

\subsection{Quasirandom graphs and hypergraphs}
We consider a variant of Tur\'an type questions in connection with quasirandom hypergraphs. Roughly speaking,
a quasirandom hypergraph ``resembles'' a random hypergraph of the same edge density, by 
sharing some of the key properties with it, i.e., properties that hold for the random  
hypergraph with probability close to $1$.

The 
investigation of quasirandom graphs was initiated with the observation
that several such properties of randomly generated graphs 
are equivalent in a deterministic sense. This phenomenon turned 
out to be useful and had a number of applications in combinatorics. 
The systematic study of quasirandom graphs was initiated  by Thomason~\cites{Th87a,Th87b} and by Chung, Graham, and Wilson~\cite{CGW89}. 
A pivotal feature of random graphs is the uniform edge distribution on ``large'' sets of vertices and a quantitative version
of this property is used to define quasirandom graphs. More precisely, a graph $G=(V, E)$ is \emph{quasirandom with density 
$d>0$} if for every subset of vertices $U\subseteq V$ the number $e(U)$ of edges contained in~$U$ satisfies 
\begin{equation}\label{eq:qrG}
e(U)=d\tbinom{|U|}{2}+o(|V|^2)\,,
\end{equation}
where $o(|V|^2)/|V|^2\to0$ as $|V(G)|$ tends to infinity. Strictly speaking, we consider here 
a sequence of graphs $G_n=(V_n,E_n)$ where the number of vertices $|V_n|$ tends to infinity, but for the sake of 
a simpler presentation we will suppress the sequence in the discussion here. 
The main result in~\cite{CGW89} asserts, that satisfying~\eqref{eq:qrG} 
is deterministically equivalent to several other important properties of random graphs. In particular, it implies 
that for any fixed graph $F$ with $v_F$ vertices and $e_F$ edges
the number $N_F(G)$ of labeled copies of $F$ in a quasirandom graph $G=(V,E)$ of density $d$
satisfies
\begin{equation}\label{eq:qrN}
	N_F(G)=d^{e_F}|V|^{v_F}+o(|V|^{v_F})\,.
\end{equation}
In other words, the number of copies of $F$ is close to the expected value in a random graph with edge density $d$.

The analogous statement for hypergraphs fails to be true and uniform edge distribution on vertex sets
is not sufficient to enforce a property similar
to~\eqref{eq:qrN} for all fixed $3$-uniform hypergraphs $F$ (see, e.g., Example~\ref{VR-graph} below). 
A stronger notion of quasirandomness for which such an embedding result actually is true,
was considered in connection with the regularity method for hypergraphs (cf.\ Section~\ref{sec:regmethod}).

In fact below we consider four different notions of quasirandom hypergraphs. The first and weakest concept that we consider here is $(d,\eta, \VVV)$-quasirandomness.

\begin{dfn}
\label{pppqr}
A $3$-uniform hypergraph $H=(V,E)$ on $n=|V|$ vertices is \emph{$(d,\eta,\VVV)$-quasirandom} if 
for every triple of subsets $X, Y, Z\subseteq V$ the number $e_{\vvv}(X, Y, Z)$ of 
triples $(x,y,z)\in X\times Y\times Z$ 
with $xyz\in E$ satisfies
\[
\big|e_{\vvv}(X, Y, Z)-d\,|X|\,|Y|\,|Z|\big|\le \eta\,n^3\,.
\] 
\end{dfn}

The central notion for the work presented here, however, is the following stronger concept of quasi\-random hypergraphs, where we ``replace'' the two sets $Y$ and $Z$ by an arbitrary set of pairs $P$.
\begin{dfn}
\label{lpqr}
A $3$-uniform hypergraph $H=(V, E)$ on $n=|V|$ vertices is \emph{$(d, \eta,\LP)$-quasirandom}
if for every subset $X\subseteq V$ of vertices and every subset of pairs of vertices
$P\subseteq V\times V$ the number $e_{\ev}(X,P)$ of pairs $(x,(y,z))\in X\times P$
with $xyz\in E$  satisfies
\begin{equation}\label{eq:deflpqr}
	\big|e_{\ev}(X,P)-d\,|X|\,|P|\big|\leq \eta\,n^3\,.
\end{equation}
\end{dfn}

Since for any hypergraph $H=(V,E)$ and sets $X$, $Y$, $Z\subseteq V$ we have
\[
	e_{\vvv}(X,Y,Z)=e_{\ev}(X,Y\times Z)\,,
\]
it follows from these definitions that  any $(d,\eta,\EV)$-quasirandom hypergraph is also $(d,\eta,\VVV)$-quasirandom.

We also remark that the three vertices appearing in $\VVV$ 
(resp.\ the vertex and the edge depicted in~$\EV$)
symbolically represent the possible choices for the sets $X$, $Y$, $Z$ (resp.\  the set of
 vertices~$X$ and for the set of pairs $P$). 
Next we come to a notion where rather than a ``set of vertices and a set of pairs'' we consider ``two sets of pairs''.

\begin{dfn}
\label{llqr}
A $3$-uniform hypergraph $H=(V, E)$ on $n=|V|$ vertices is \emph{$(d, \eta,\QLL)$-quasirandom}
if for any two subsets of pairs $P$, $Q\subseteq V\times V$
the number $e_{\ee}(P,Q)$ of pairs of pairs $((x,y),(x,z))\in P\times Q$
with $xyz\in E$  satisfies
\[
	\big|e_{\ee}(P,Q)-d\,|\cK_{\ee}(P,Q)|\big|\leq \eta\,n^3\,,
\]
where $\cK_{\ee}(P,Q)$ denotes the set of pairs in $P\times Q$ of the form $((x,y),(x,z))$. 
\end{dfn}

Finally we will introduce the following strongest notion of quasirandomness that plays an important  r\^{o}le in the hypergraph regularity method. 

\begin{dfn}
\label{lllqr}
A $3$-uniform hypergraph $H=(V, E)$ on $n=|V|$ vertices is \emph{$(d, \eta,\LLL)$-quasirandom}
if for any three subsets $P$, $Q$, $R\subseteq V\times V$ the number 
$e_{\eee}(P,Q, R)$ of triples $((x,y),(x,z),(y,z))\in P\times Q\times R$
with $xyz\in E$  satisfies
\[
	\big|e_{\eee}(P,Q,R)-d\,|\cK_{\eee}(P,Q,R)|\big|\leq \eta\,n^3\,,
\]
where $\cK_{\eee}(P,Q,R)$ denotes the set of  triples in $P\times Q\times R$ with $((x,y),(x,z),(y,z))$.
\end{dfn}

For a symbol $\star\in\{\VVV, \LP, \QLL, \LLL\}$ we sometimes write
a hypergraph~$H$ is $\star$-quasirandom to mean that it is $(d,\eta,\star)$-quasirandom
for some positive $d$ and some small~$\eta$. More precisely, we imagine a sequence of 
$(d,\eta_n,\star)$-quasirandom hypergraphs with $\eta_n\to0$ as $n\to\infty$.
Moreover, for $\star\in\{\VVV, \LP, \QLL, \LLL\}$ we denote by $\cQ(d,\eta,\star)$ 
the class of all $(d,\eta,\star)$-quasirandom hypergraphs and 
one can observe that
\begin{equation}\label{hierarchy}
\cQ(d,\eta,\LLL)\subseteq
\cQ(d,\eta,\QLL)\subseteq
\cQ(d,\eta,\LP)\subseteq
\cQ(d,\eta,\VVV)
\end{equation}
holds for all $d\in[0,1]$ and $\eta>0$. 
We are interested in Tur\'an densities for quasirandom hypergraphs given by the following functions.
\begin{dfn} \label{def:pi} 
	Given a $3$-uniform hypergraph $F$ and a symbol $\star\in\{\VVV, \LP, \QLL, \LLL\}$ we set
\begin{multline*}
	\pi_\star(F)=\sup\bigl\{d\in[0,1]\colon \text{for every $\eta>0$ and $n\in \NN$ there exists}\\
		\text{an $F$-free $3$-uniform hypergraph $H\in\cQ(d,\eta, \star)$ with $|V(H)|\geq n$}\bigr\}\,.
\end{multline*}
\end{dfn} 
Due to the inclusions~\eqref{hierarchy} we have
\begin{equation}\label{qr-order}
\pivvv(F)\ge\pilp(F)\ge\pill(F)\ge\pilll(F)
\end{equation}
for any $3$-uniform hypergraph $F$. The last among those four parameters is trivial 
because we have $\pilll(F)=0$ for any hypergraph $F$. This follows directly from the results in~\cite{KRS02}
(alternatively, it can be easily deduced by the regularity method for hypergraphs via a combined application of
Theorems~\ref{thm:TuRL} and~\ref{thm:EL}). 

Moreover, when~$F$ is tripartite or linear it can be shown that all four of these quantities vanish 
(in fact, a full characterisation of this event is going to appear in \cite{RRS-zero}).
For \mbox{$F=K_4^{(3)-}$}, the $3$-uniform hypergraph consisting of three hyperedges on four vertices,
it is known that 
\[
	\pivvv(K_4^{(3)-})=\pilp(K_4^{(3)-})=1/4
	\qqand
	\pill(K_4^{(3)-})=0\,.
\]
In fact, Glebov, Kr{\'a}{\soft{l}}, and Volec established $\pivvv(K_4^{(3)-})\le 1/4$
in~\cite{GKV} (see also~\cite{RRS-a} for an alternative proof).
On the other hand, one can check that 
the hypergraph corresponding to the cyclically oriented triangles of a
random tournament, which provides $\pivvv(K_4^{(3)-})\ge 1/4$, 
is also $\LP$-quasirandom, and by \eqref{qr-order}
we get
\[
	\frac14\le \pilp(K_4^{(3)-})\le \pivvv(K_4^{(3)-})\le \frac14\,,
\]
which establishes our first claim. 
The second identity  follows from $\pill(K^{(3)}_4)=0$, which will appear in \cite{RRS-d}. 

The next case one might wish to study is the tetrahedron. The following random
construction from~\cite{Ro86} was used to show that $\pivvv(K_4^{(3)})\ge 1/2$
and Erd\H os~\cite{Er90} suggested that this might be best possible. 
\begin{exmp}\label{VR-graph}
Given any map $\varphi\colon\binom{[n]}{2}\to\{\text{red}, \text{green}\}$ we define the $3$-uniform 
hypergraph~$H_\varphi$ with vertex set $[n]=\{1,\dots,n\}$ by putting a triple $ijk$ with $i<j<k$ into~$E(H_\varphi)$ if and only if the colours of $ij$ and $ik$ differ. 

Irrespective of the choice of the colouring $\varphi$, the hypergraph $H_\varphi$ contains no tetrahedra: for if $a$, $b$, $c$, and $d$ are any four distinct vertices, say with $a=\min\{a,b,c,d\}$, then it is impossible for all three of the pairs $ab$, $ac$, and $ad$ to have distinct colours, whence not all three of the triples $abc$, $abd$, and $acd$ can be hyperedges of $H_\varphi$.  

It was noticed in \cite{Ro86} that if the colouring $\varphi$ is chosen uniformly at random, then for any $\eta>0$ the hypergraph $H_\varphi$ is with high probability $(1/2, \eta, \VVV)$-quasirandom as $n$ tends to infinity. This is easily checked using standard tail estimates for binomial distributions and similar arguments show that we may replace $\VVV$ by $\LP$ in this observation. In other words, this example shows that
\[
\pilp(K^{(3)}_4)\ge\tfrac12\,.
\]
holds.	
\end{exmp}
Our main contribution here shows that the construction given in Example~\ref{VR-graph} 
is best possible.        
\begin{thm}[Main result]\label{K4lp}
	For every $\eps>0$ there exist an $\eta>0$ and an integer $n_0$ such that
	every 3-uniform $(\tfrac12+\eps, \eta, \LP)$-quasirandom hypergraph $H$
	with  at least $n_0$ vertices contains a tetrahedron. 
\end{thm}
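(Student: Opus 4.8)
We argue by contradiction, so assume that $H$ contains no tetrahedron. For a pair $ab$ write $N(a,b)=\{w\colon abw\in E\}$, and let $L_v=\{pq\colon vpq\in E\}$ be the link graph of $v$. Unravelling the four triples spanned by a set $\{v_1,v_2,a,b\}$, one sees that a tetrahedron on these vertices is precisely a pair $ab$ which is an edge of both link graphs $L_{v_1}$ and $L_{v_2}$ and satisfies $a,b\in N(v_1,v_2)$; equivalently, writing $W=N(v_1,v_2)$, a tetrahedron on $\{v_1,v_2,a,b\}$ is exactly a common edge of the two graphs $L_{v_1}[W]$ and $L_{v_2}[W]$ on the vertex set $W$. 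Hence the whole problem is to exhibit one pair $\{v_1,v_2\}$ whose link graphs, restricted to $W=N(v_1,v_2)$, are not edge-disjoint, and for that it suffices to find a pair with
\[
	e\bigl(L_{v_1}[W]\bigr)+e\bigl(L_{v_2}[W]\bigr)>\tbinom{|W|}{2}\,.
\]

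The next step is to extract from \eqref{eq:deflpqr} the structural facts we shall need. Applying \eqref{eq:deflpqr} with $X=V$ and with $P$ ranging over positive-density sets of ordered pairs shows that all but $o(n^2)$ pairs $ab$ have codegree $|N(a,b)|=\bigl(\tfrac12+\eps\bigr)n\pm o(n)$; applying it with $P=V\times V$ and $X$ a positive-density vertex set shows that all but $o(n)$ vertices $v$ satisfy $e(L_v)=\bigl(\tfrac12+\eps\bigr)\binom n2\pm o(n^2)$; and, since $N(v_1,v_2)\subseteq V$, restricting \eqref{eq:deflpqr} to sets $X\subseteq N(v_1,v_2)$ and $P\subseteq N(v_1,v_2)\times N(v_1,v_2)$ shows that whenever $|N(v_1,v_2)|\ge\gamma n$ the induced hypergraph $H[N(v_1,v_2)]$ is again $\bigl(\tfrac12+\eps,\,\gamma^{-3}\eta,\,\LP\bigr)$-quasirandom. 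So there are $\ge\eps n^2$ pairs of codegree $\ge\bigl(\tfrac12+\tfrac\eps2\bigr)n$ to start from, and we may pass to such a codegree neighbourhood as often as we like while only losing a constant factor in $\eta$.

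The difficulty is entirely in producing a pair satisfying the displayed inequality, and here a pure averaging argument provably fails: summing the opposite inequality over all pairs gives only $3\sum_v t(L_v)\le\sum_{\{v_1,v_2\}}\binom{|N(v_1,v_2)|}{2}$, where $t(L_v)$ denotes the number of triangles of $L_v$; by the previous paragraph the right-hand side equals $\bigl(\tfrac14+o(1)\bigr)\bigl(\tfrac12+\eps\bigr)^2n^4$, while the left-hand side equals $4\,(\#\text{tetrahedra})+(\#\text{four-element sets spanning exactly three edges})$, hence $3\,(\#\text{four-element sets spanning exactly three edges})$ under our assumption, and this is already strictly smaller than the right-hand side for the density-$\tfrac12$ construction of Example~\ref{VR-graph}, so no contradiction ensues. (This is exactly the obstruction that keeps the weakly quasirandom version --- the question of Erd\H{o}s --- open; one must use the full strength of \eqref{eq:deflpqr} together with the density surplus in a non-averaged way.) The first piece of leverage is that the surplus forces some link to be robustly far from bipartite: since $\sum_v\bigl(e(L_v)-\operatorname{mc}(L_v)\bigr)\ge 3e(H)-\tfrac14 n^3=\bigl(\tfrac\eps2-o(1)\bigr)n^3$, where $\operatorname{mc}$ denotes the maximum-cut size, a positive proportion of vertices $v$ have the property that $L_v$ cannot be made bipartite by deleting $o(\eps n^2)$ edges. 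The plan I would pursue is to feed this into the restriction principle of the previous paragraph in an iterative way: inside a codegree neighbourhood $W=N(v_1,v_2)$ the hypergraph $H[W]$ has the \emph{same} density $\tfrac12+\eps$, hence again contains a robustly non-bipartite link, and by repeatedly passing to codegree neighbourhoods one tries to drive the relevant local densities of a pair of links above $\tfrac12$ on a common large set. Converting this \emph{aggregate} non-bipartiteness of the links, amplified through nested codegree neighbourhoods, into the \emph{single} inequality $e(L_{v_1}[W])+e(L_{v_2}[W])>\binom{|W|}{2}$ for one well-chosen pair --- with the error terms from \eqref{eq:deflpqr} kept under control and the gain not washing out through the iteration --- is the crux of the matter, and where essentially all of the work lies.
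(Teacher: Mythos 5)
Your proposal is not a proof; it is a correct reformulation of the problem together with some valid preliminary observations, followed by an explicit acknowledgement that the decisive step is missing. You correctly reduce the existence of a tetrahedron to finding a pair $\{v_1,v_2\}$ with $e(L_{v_1}[W])+e(L_{v_2}[W])>\binom{|W|}{2}$ where $W=N(v_1,v_2)$, you correctly extract from \eqref{eq:deflpqr} the codegree and link-density statistics and the (genuinely useful) fact that restricting to a codegree neighbourhood of positive density preserves $\LP$-quasirandomness with the \emph{same} density $\tfrac12+\eps$ and only a constant-factor loss in $\eta$, and you correctly observe that a single averaging pass cannot separate density $\tfrac12+\eps$ from the extremal density-$\tfrac12$ construction. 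But the passage from ``a positive proportion of links are robustly non-bipartite, even after restricting to nested codegree neighbourhoods'' to the \emph{single} inequality above is precisely the entire content of the theorem, and you write yourself that this ``is the crux of the matter, and where essentially all of the work lies.'' No mechanism is given for why the amplification does not ``wash out through the iteration,'' and it is not at all clear that it would not: the non-bipartiteness defect is an aggregate $\Theta(\eps n^3)$ quantity spread over $n$ links, while the inequality you need is a pointwise statement for a single pair, and iterating a constant number of times degrades $\eta$ without obviously concentrating the surplus.

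For comparison, the paper's proof takes a structurally different route. It applies the $3$-uniform regularity lemma (Theorem~\ref{thm:TuRL}) and the embedding lemma for $K_4^{(3)}$ (Theorem~\ref{thm:EL}), reducing the problem to the purely combinatorial Lemma~\ref{K4modif} about a ``reduced hypergraph'' $\cA$ on $\binom m2$ vertex classes $\cP^{ij}$, whose hyperedges record which triads are dense and regular. The r\^ole that $\LP$-quasirandomness plays is exactly to guarantee the degree hypothesis~\ref{it:K4modif-b} of that lemma: for all but few vertices $P^{ij}\in\cP^{ij}$, the degree of $P^{ij}$ in $\cA^{ijk}$ exceeds $(\tfrac12+\tfrac\eps2)\ell^2$. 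The combinatorial core is then a pigeonhole argument on \emph{hole types}: each index $i\in[3,m]$ is coloured by the set $\Xi_i$ of triples $(p,q,r)$ for which $\cA^{12i}$ has a $(p,q,r)$-hole; two indices $i_3,i_4$ of the same colour are found; vertices $P^{14},P^{24}$ are chosen to maximise codegree, $P^{34}$ to have large common neighbourhoods with both, and the hypothetical independence of the resulting neighbourhoods in $\cA^{123}$ is transferred to $\cA^{124}$ via $\Xi_3=\Xi_4$ and contradicted by a double count ending in $(2x-1)(2y-1)<0$ with $x,y\ge\tfrac12$. Your sketch, if it could be completed, would be a regularity-free and arguably more elementary argument, which would be interesting in its own right; but as written it has a genuine gap exactly where you say it does.
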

The proof of Theorem~\ref{K4lp} will be based on the regularity method for $3$-uniform 
hypergraphs which is summarised to the necessary extent in the following section. The details of the 
proof of Theorem~\ref{K4lp} appear in Section~\ref{sec:K4}. We close with a few remarks
on extremal problems involving the tetrahedron for related notions of quasirandomness in Section~\ref{sec:conc}.

\section{Hypergraph regularity method}\label{sec:regmethod}
A key tool in the proof of Theorem~\ref{K4lp} is the regularity lemma for $3$-uniform hypergraphs. 
We follow the approach from~\cites{RoSchRL,RoSchCL} combined with the results from~\cite{Gow06} and~\cite{NPRS09}
and below we introduce the necessary notation.

For two disjoint sets $X$ and $Y$ we denote by $K(X,Y)$ the complete bipartite graph with that vertex partition.
We say a bipartite graph $P=(X\dcup Y,E)$ is \emph{$(\delta_2, d_2)$-regular} if for all subsets 
$X'\subseteq X$ and $Y'\subseteq Y$ we have 
\[
	\big|e(X',Y')-d_2|X'||Y'|\big|\leq \delta_2 |X||Y|\,,
\]
where $e(X',Y')$ denotes the number of edges of $P$ with one vertex in $X'$ and one vertex in~$Y'$.
Moreover, for $k\geq 2$ we say a $k$-partite graph $P=(X_1\dcup \dots\dcup X_k,E)$ is $(\delta_2, d_2)$-regular, 
if all its  $\binom{k}{2}$ naturally 
induced bipartite subgraphs $P[X_i,X_j]$ are $(\delta_2, d_2)$-regular. 
For a tripartite graph $P=(X\dcup Y\dcup Z,E)$
we denote by $\cK_3(P)$ the triples of vertices spanning a triangle in~$P$, i.e., 
\[
	\cK_3(P)=\big\{\{x,y,z\}\subseteq X\cup Y\cup Z\colon xy, xz, yz\in E\big\}\,.
\]
If the tripartite graph $P$ is $(\delta_2, d_2)$-regular, then the so-called \emph{triangle counting lemma}
implies
\begin{equation}
	\label{eq:TCL}
		|\cK_3(P)|\leq d_2^3|X||Y||Z|+3\delta_2|X||Y||Z|\,.
\end{equation}

We say a $3$-uniform hypergraph $H=(V,E_H)$ is regular w.r.t.\ a tripartite graph $P$ if it matches 
approximately
the same proportion of triangles for every subgraph $Q\subseteq P$. This we make precise in the following definition.

\begin{dfn}
\label{def:reg}
A $3$-uniform hypergraph $H=(V,E_H)$ is \emph{$(\delta_3,d_3)$-regular w.r.t.\ 
a tripartite graph $P=(X\dcup Y\dcup Z,E_P)$} 
with $V\supseteq  X\cup Y\cup Z$ if for every tripartite subgraph $Q\subseteq P$ we have 
\[
	\big||E_H\cap\cK_3(Q)|-d_3|\cK_3(Q)|\big|\leq \delta_3|\cK_3(P)|\,.
\]
Moreover, we simply say \emph{$H$ is $\delta_3$-regular w.r.t.\ $P$}, if it is $(\delta_3,d_3)$-regular for some $d_3\geq 0$.
We also define the \emph{relative density} of $H$ w.r.t.\ $P$ by
\[
	d(H|P)=\frac{|E_H\cap\cK_3(P)|}{|\cK_3(P)|}\,,
\]
where we use the convention $d(H|P)=0$ if $\cK_3(P)=\emptyset$. If $H$ is not $\delta_3$-regular w.r.t.\ $P$, then we simply refer to it as \emph{$\delta_3$-irregular}.
\end{dfn}

The regularity lemma for $3$-uniform hypergraphs, introduced by Frankl and R\"odl in~\cite{FR}, provides for 
every hypergraph $H$ a partition of its vertex set and a partition of the edge sets of the complete bipartite 
graphs induced by the vertex partition such that for appropriate constants $\delta_3$, $\delta_2$, and $d_2$ 
\begin{enumerate}[label=\nlabel]
	\item the bipartite graphs given by the partitions are $(\delta_2,d_2)$-regular and
	\item $H$ is $\delta_3$-regular for ``most'' tripartite graphs $P$ given by the partition.
\end{enumerate}
In many proofs based on the regularity method it is
convenient to ``clean'' the regular partition provided by the regularity lemma. In particular, 
we shall disregard hyperedges of~$H$ that belong to $\cK_3(P)$ where $H$ is not $\delta_3$-regular or 
where $d(H|P)$ is very small. These properties are rendered in the following somewhat standard
corollary of the regularity lemma.

\begin{thm}
	\label{thm:TuRL}
	For every $d_3>0$, $\delta_3>0$ and  $m\in\NN$, and every function $\delta_2\colon \NN \to (0,1]$,
	there exist integers~$T_0$ and $n_0$ such that for every $n\geq n_0$
	and every $n$-vertex $3$-uniform hypergraph $H=(V,E)$ the following holds.
	
	There exists a subhypergraph $\hat H=(\hat V,\hat E)\subseteq H$, an integer $\l\leq T_0$,
	a vertex partition $V_1\dcup\dots\dcup V_m=\hat V$, 
	and for all $1\leq i<j\leq m$ there exists 
	a partition 
	\[
		\cP^{ij}=\{P^{ij}_\alpha=(V_i\dcup V_j,E^{ij}_\alpha)\colon 1\leq \alpha \leq \l\}
	\] 
	of $K(V_i,V_j)$ satisfying the following properties
	\begin{enumerate}[label=\rmlabel]
		\item\label{TuRL:1} $|V_1|=\dots=|V_m|\geq (1-\delta_3)n/T_0$,
		\item\label{TuRL:2} for every $1\leq i<j\leq m$ and $\alpha\in [\l]$ the bipartite graph $P^{ij}_\alpha$ is $(\delta_2(\l),1/\l)$-regular,
		\item $\hat H$ is $\delta_3$-regular w.r.t.\ $P^{ijk}_{\alpha\beta\gamma}$
			for all tripartite graphs (which will be later referred to as triads)
			\begin{equation}\label{eq:triad}
				P^{ijk}_{\alpha\beta\gamma}=P^{ij}_\alpha\dcup P^{ik}_\beta\dcup P^{jk}_\gamma=(V_i\dcup V_j\dcup V_k, E^{ij}_\alpha\dcup E^{ik}_{\beta}\dcup E^{jk}_{\gamma})\,,
			\end{equation}
			with $1\leq i<j<k\leq m$ and $\alpha$, $\beta$, $\gamma\in[\l]$, where the relative density $d(H|P)$ is either $0$ or
			$d(H|P)\geq d_3$, and			
		\item\label{TuRL:4} for every $1\leq i<j<k\leq m$ 												there are at most $\delta_3\,\ell^3$ triples $(\alpha, \beta, \gamma)\in[\ell]^3$
			such that~$H$ is $\delta_3$-irregular with respect to the triad 
			$P^{ijk}_{\alpha\beta\gamma}$.
	\end{enumerate}
\end{thm}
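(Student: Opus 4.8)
The statement to be proved is the "cleaned" version of the Frankl--R\"odl regularity lemma, Theorem~\ref{thm:TuRL}, starting from the raw regularity lemma for $3$-uniform hypergraphs. The plan is to first invoke the Frankl--R\"odl regularity lemma in the form asserting, for suitable parameters, the existence of a vertex partition $V=V_0\dcup V_1\dcup\dots\dcup V_m$ (with a negligible exceptional class $V_0$) and partitions $\cP^{ij}$ of each $K(V_i,V_j)$ into $\l$ bipartite graphs that are $(\delta_2(\l),1/\l)$-regular, such that all but a $\delta_3'$-fraction of triads are $\delta_3'$-regular with respect to $H$; here $\delta_3'$ and the function $\delta_2$ will be chosen small enough (in terms of $d_3$, $\delta_3$, $m$) to absorb the losses incurred in the cleaning step. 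The integer $T_0$ bounding $\l$ comes directly from this application. One should also arrange, by a standard preliminary step (discarding a few vertices), that the classes $V_i$ with $i\ge 1$ all have exactly the same size and $|V_i|\ge(1-\delta_3)n/T_0$, which gives property~\ref{TuRL:1}; the vertices thrown away go into the "loss" and will not belong to $\hat V$.

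\textbf{The cleaning step.} Having the raw partition, I would define $\hat H\subseteq H$ by deleting exactly those hyperedges $e\in E$ of $H$ that lie in $\cK_3(P^{ijk}_{\alpha\beta\gamma})$ for a triad $P^{ijk}_{\alpha\beta\gamma}$ that is either $\delta_3'$-irregular for $H$ or has relative density $d(H|P^{ijk}_{\alpha\beta\gamma})<d_3$, and also all hyperedges meeting the exceptional set or having two vertices in the same class $V_i$. For each triad that survives, the relative density $d(\hat H|P^{ijk}_{\alpha\beta\gamma})$ either is $0$ (if it was cleaned away) or differs from $d(H|P^{ijk}_{\alpha\beta\gamma})$ only through hyperedges removed for \emph{other} reasons, but since each hyperedge lies in exactly one triad of the partition, cleaning one triad does not affect the relative density counted inside another. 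Hence surviving triads keep their old relative density, which is $\ge d_3$, giving the dichotomy in part~(iii); and $\hat H$ remains $\delta_3$-regular (indeed $\delta_3'$-regular with $\delta_3'\le\delta_3$) with respect to every surviving triad because removing \emph{all} hyperedges inside a triad only changes $d(\hat H|Q)$ by at most the trivial amount on those $Q$, and on untouched triads nothing changed. Property~\ref{TuRL:4} is inherited verbatim from the raw lemma (with $\delta_3'\le\delta_3$), and property~\ref{TuRL:2} is inherited from the choice of $\delta_2$.

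\textbf{Where the work concentrates.} The routine but slightly delicate point is the bookkeeping showing that the regularity of $\hat H$ with respect to a surviving triad $P=P^{ijk}_{\alpha\beta\gamma}$ follows from that of $H$: one must check that for every subgraph $Q\subseteq P$ one has $\bigl||E_{\hat H}\cap\cK_3(Q)|-d_3(\hat H|P)\,|\cK_3(Q)|\bigr|\le\delta_3|\cK_3(P)|$, using that $E_{\hat H}\cap\cK_3(P)$ is either empty or equals $E_H\cap\cK_3(P)$ (since hyperedges are cleaned triad-by-triad), so the estimate transfers with the \emph{same} density constant and the same $\delta_3'\le\delta_3$. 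The only genuine obstacle is purely quantitative: one must fix the order of quantifiers so that $\delta_3'$ and the function $\delta_2$ (and the auxiliary "raw" parameters fed to Frankl--R\"odl) are chosen \emph{after} $d_3$, $\delta_3$, $m$ but \emph{before} the lemma is applied, so that all the small-error terms --- from the exceptional vertex class, from irregular triads, and from the triangle counting lemma~\eqref{eq:TCL} used to compare $|\cK_3(Q)|$ across the partition --- are each at most, say, $\delta_3|\cK_3(P)|/3$. This is a standard "constants chasing" argument and no new idea is needed beyond carefully tracking that deletions respect the triad partition.
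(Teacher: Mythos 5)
The key gap is in how you obtain the per-triple bound \ref{TuRL:4} and, relatedly, how you arrange for exactly $m$ vertex classes. The raw Frankl--R\"odl regularity lemma produces a vertex partition into some number $t$ of classes, with $t$ not fixed in advance but only bounded between a lower threshold and $T_0$, and it guarantees only a \emph{global} bound: the total number of tuples $(i,j,k,\alpha,\beta,\gamma)$ for which $H$ is irregular with respect to $P^{ijk}_{\alpha\beta\gamma}$ is at most a $\delta_3'$-fraction of all triads. This does not imply that for every fixed $1\le i<j<k\le t$ at most $\delta_3\ell^3$ of the $\ell^3$ triads between $V_i$, $V_j$, $V_k$ are irregular: the irregular triads could in principle be concentrated on a few triples $(i,j,k)$. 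So your assertion that ``Property~\ref{TuRL:4} is inherited verbatim from the raw lemma'' is not correct. Nor can you repair this by simply taking $\delta_3'$ so small that the total number of irregular triads is below $\delta_3\ell^3$, since that would require $\delta_3'\le\delta_3/\binom{t}{3}$; but $t$ can be as large as $T_0$, and $T_0$ is only produced \emph{after} $\delta_3'$ has been fed into the regularity lemma, so this dependence is circular.

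The missing ingredient is the reduced-hypergraph argument, which is precisely what the paper alludes to by pointing at ``the hypergraph $R$'' in \cite{RRS-a}*{Proof of Corollary 3.3}. One applies the regularity lemma asking for at least $t_0=t_0(m,\delta_3)$ vertex classes, and then defines an auxiliary $3$-uniform hypergraph $R$ on the index set of the classes in which $ijk$ is a hyperedge if and only if at most $\delta_3\ell^3$ of the triads between $V_i$, $V_j$, $V_k$ are $\delta_3$-irregular with respect to $H$. By Markov's inequality the global bound forces $R$ to have density close to $1$ (provided $\delta_3'$ is small compared with $\delta_3$), and a standard supersaturation or Ramsey-type argument yields a clique $K_m^{(3)}$ in $R$ once $t_0$ is large enough. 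Relabelling the $m$ classes spanning this clique as $V_1,\dots,V_m$ gives simultaneously the exactly-$m$-classes requirement and property~\ref{TuRL:4}. Your cleaning argument is otherwise sound --- each hyperedge lies in $\cK_3$ of a unique triad, so wiping the irregular or sparse triads yields an $\hat H$ satisfying the dichotomy and regularity demanded by (iii) --- but without the clique-selection step the theorem as stated does not follow.
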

The standard proof of Theorem~\ref{thm:TuRL} based on a refined version of 
the regularity lemma from~\cite{RoSchRL}*{Theorem~2.3}
can be found in~\cite{RRS-a}*{Corollary 3.3}. Actually the statement there differs from the one given here in the final clause, but the proof from~\cite{RRS-a} shows the present version as well. (In fact, the new version of \ref{TuRL:4} is explicitly stated as clause (a) in the definition of the hypergraph $R$ in~\cite{RRS-a}*{Proof of Corollary 3.3}.)  

We shall use a so-called \emph{counting/embedding lemma}, which allows us to embed hypergraphs of fixed isomorphism type 
into appropriate and sufficiently regular and dense triads of the partition provided by Theorem~\ref{thm:TuRL}. We state 
a simplified version of the embedding lemma for tetrahedra only.
The following statement is a direct consequence of~\cite{NPRS09}*{Corollary~2.3}.

\begin{thm}[Embedding Lemma for $K_4^{(3)}$]
	\label{thm:EL}
	For every $d_3>0$ there exist $\delta_3>0$ and functions 
	$\delta_2\colon \NN\to(0,1]$ and  $N\colon \NN\to\NN$
	such that the following holds for every $\l\in\NN$.

	Suppose $P=(V_1\dcup\dots\dcup V_4, E_P)$ is a $(\delta_2(\l),\frac{1}{\l})$-regular, $4$-partite graph
	with vertex classes satisfying  $|V_1|=\dots=|V_4|\geq N(\l)$ and suppose $H$ is a $4$-partite, $3$-uniform hypergraph
	such that for all $1\leq i< j< k\leq 4$ we have
	\begin{enumerate}[label=\alabel]
		\item\label{EL:a} $H$ is $\delta_3$-regular w.r.t.\ to the tripartite graph $P[V_i\dcup V_j\dcup V_k]$ and 
		\item\label{EL:b} $d(H|P[V_i\dcup V_j\dcup V_k])\geq d_3$,
	\end{enumerate} 
	then $H$ contains a copy of $K_4^{(3)}$.
\end{thm}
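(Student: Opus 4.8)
The plan is to recognise this as the specialisation to $F=K_4^{(3)}$ of the general counting lemma for regular $3$-uniform complexes and to deduce it from that machinery rather than building an embedding by hand. One views the data as a regular $3$-complex: the partition $V_1\dcup\dots\dcup V_4$ as its level-$1$ part, the $4$-partite graph $P$ as its level-$2$ part, and the set $E_H\cap\cK_3(P)$ of hyperedges lying on triangles of $P$ as its level-$3$ part. What has to be shown is that the number $\mathcal N$ of labelled tuples $(v_1,v_2,v_3,v_4)\in V_1\times\dots\times V_4$ for which all six pairs $v_iv_j$ lie in $E(P)$ and all four triples $v_iv_jv_k$ lie in $E_H$ is strictly positive, since any such tuple spans a copy of $K_4^{(3)}$.

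The first ingredient is the counting lemma for dense regular graphs, applied to the bottom level of the complex. Since $P$ is $(\delta_2(\l),1/\l)$-regular, the number of $4$-cliques of $P$ with exactly one vertex in each $V_i$ equals $\l^{-6}|V_1|\cdots|V_4|$ up to an additive error which, for a fixed $\l$, can be made smaller than $\tfrac12\l^{-6}|V_1|\cdots|V_4|$ by choosing $\delta_2(\l)$ small enough; the triangle case of this is the two-sided refinement of~\eqref{eq:TCL}. Hence, once $N(\l)$ is large and $\delta_2(\l)$ small, $P$ carries at least $\tfrac12\l^{-6}|V_1|\cdots|V_4|$ such $4$-cliques.

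The main step is the hypergraph counting lemma: among those $4$-cliques of $P$, the proportion whose four triples all belong to $E_H$ differs from $\prod_{1\le i<j<k\le 4}d\bl H|P[V_i\dcup V_j\dcup V_k]\br$ — which by hypothesis~\ref{EL:b} is at least $d_3^4$ — by less than $\tfrac12 d_3^4$, provided $\delta_3$ is small enough and then $\delta_2(\l)$ small, $N(\l)$ large. This is exactly the estimate supplied by \cite{NPRS09}*{Corollary~2.3} (and is also contained in \cite{Gow06}); one simply feeds the hypotheses~\ref{EL:a},~\ref{EL:b} and the $(\delta_2(\l),1/\l)$-regularity of $P$ into that corollary. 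Multiplying the two counts yields $\mathcal N\ge\tfrac14 d_3^4\,\l^{-6}|V_1|\cdots|V_4|>0$, where one first fixes $\delta_3=\delta_3(d_3)$ and only afterwards chooses the functions $\delta_2$ and $N$; hence $H$ contains a copy of $K_4^{(3)}$.

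The only genuine difficulty is hidden inside the counting lemma quoted in the previous paragraph: $(\delta_2,1/\l)$-regularity of $P$ by itself does not control the joint distribution of the vertex links of $H$, so its proof runs a telescoping argument over the four triads, at each stage invoking $\delta_3$-regularity of $H$ with respect to the current triad while checking that the set of ``partial tetrahedra'' assembled so far still forms a dense and regular enough configuration for the next step. Reproducing that argument is beyond what is needed here; once it is available in the packaged form of \cite{NPRS09}*{Corollary~2.3}, the deduction above is purely a matter of matching up the constants, which is why the implication is ``direct''.
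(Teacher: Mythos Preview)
Your proposal is correct and matches the paper's own treatment: the paper does not prove Theorem~\ref{thm:EL} but simply records it as ``a direct consequence of~\cite{NPRS09}*{Corollary~2.3}'', and your write-up spells out precisely that deduction. The extra detail you give (counting $K_4$'s in $P$ via graph regularity, then invoking the hypergraph counting lemma to get the right proportion of tetrahedra) is a faithful unpacking of what ``direct consequence'' means here.
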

In an application of Theorem~\ref{thm:EL} the tripartite graphs $P[V_i\dcup V_j\dcup V_k]$ in~\ref{EL:a} and~\ref{EL:b}
will be given by triads  $P^{ijk}_{\alpha\beta\gamma}$ from the partition given by Theorem~\ref{thm:TuRL}.
For the proof of Theorem~\ref{K4lp} we consider a $\LP$-quasirandom hypergraph~$H$ of density $1/2+\eps$.
We will apply the regularity lemma in the form of Theorem~\ref{thm:TuRL} to~$H$. The main part of the proof
concerns the appropriate selection of dense and regular triads, that are ready for an application of the embedding lemma
for $K^{(3)}_4$. This will be the focus in Section~\ref{sec:K4}.

\section{Embedding tetrahedra}
\label{sec:K4}

In this section we deduce Theorem~\ref{K4lp}. The proof will be based on the regularity method for hypergraphs in the form of 
Theorem~\ref{thm:TuRL} and the embedding lemma (Theorem~\ref{thm:EL}). 
Moreover, it relies on the following lemma (see Lemma~\ref{K4modif} below),
which locates in a sufficiently regular partition of a $\LP$-quasirandom hypergraph with density 
$>1/2$ a collection of triads that are ready for an application of the embedding lemma for~$K_4^{(3)}$.
\begin{lemma}\label{K4modif}
For every $\eps>0$ there exist $\delta>0$ and an integer $m$ such  that
the following holds.
If $\cA$ is an $\binom{m}{2}$-partite $3$-uniform hypergraph with 
\begin{enumerate}[label=\rmlabel]
\item nonempty vertex classes $\cP^{ij}$ for 
$1\le i<j\le m$ such that
\item \label{it:K4modif-b}for each triple $ijk\in\binom{[m]}{3}$ the 
	restriction $\cA^{ijk}$ of $\cA$ to $\cP^{ij}\cup \cP^{ik}\cup \cP^{jk}$ 
	has the property that all but at most $\delta|\cP^{ab}|$ vertices of $\cP^{ab}$ are contained in 
	at least  $(1/2+\eps)|\cP^{ac}||\cP^{bc}|$ hyperedges of $\cA^{ijk}$ for all $\{a,b,c\}=\{i,j,k\}$, 
\end{enumerate}
then there are four distinct indices $i_1$, $i_2$, $i_3$, and $i_4$ from~$[m]$ together 
with six vertices $P^{ab}\in \cP^{i_ai_b}$ for $1\le a<b\le4$ such that all four $P^{12}P^{13}P^{23}$,
$P^{12}P^{14}P^{24}$, $P^{13}P^{14}P^{34}$, and~$P^{23}P^{24}P^{34}$ are triples of $\cA$. 
\end{lemma}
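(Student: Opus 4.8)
The plan is to pass, by a random choice of representatives, from the ``reduced'' hypergraph $\cA$ to an ordinary $3$-uniform hypergraph $\cR$ on the index set $[m]$ in which the sought configuration is simply a copy of $K_4^{(3)}$, and then to locate that copy by an argument whose engine is the elementary fact that two subsets of $[m]$ of size $>m/2$ must intersect. This last point is the reason the numerical threshold in the hypothesis is $1/2$ rather than, say, the Tur\'an density $\pi(K_4^{(3)})$: below $1/2$ the relevant pairs of sets can be made disjoint, and indeed the construction of Example~\ref{VR-graph} survives.

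\emph{The reduction.} Choose $m$ large and $\delta$ tiny in terms of $\eps$ (with $\delta\ll m^{-3}$). For each pair $1\le i<j\le m$ pick $v^{ij}\in\cP^{ij}$ uniformly and independently at random. Since $\delta\ll m^{-3}$, with high probability every $v^{ij}$ is, for every triad $ijk$ it lies in, one of the $\ge(1-\delta)|\cP^{ij}|$ ``good'' vertices of hypothesis~\ref{it:K4modif-b}; condition on this event. Define a $3$-uniform hypergraph $\cR$ on $[m]$ by declaring $ijk\in\cR$ iff $v^{ij}v^{ik}v^{jk}$ is a hyperedge of $\cA$. A copy of $K_4^{(3)}$ in $\cR$, on indices $i_1,\dots,i_4$, gives exactly the six vertices $P^{ab}=v^{i_ai_b}$ demanded by the lemma. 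The hypothesis enters as follows: conditioned on the value of $v^{ij}$, the set of pairs $(x,y)\in\cP^{ik}\times\cP^{jk}$ with $v^{ij}xy\in\cA$ has size $\ge(1/2+\eps)|\cP^{ik}||\cP^{jk}|$, so $\Pr[ijk\in\cR\mid v^{ij}]\ge 1/2+\eps$; moreover, as $k$ varies over $[m]\setminus\{i,j\}$ these events are conditionally independent given $v^{ij}$, since they involve pairwise disjoint sets of the remaining variables. Hence, by a Chernoff bound and a union bound over the $\binom m2$ pairs, with positive probability $\cR$ has minimum codegree $\ge(1/2+\eps/2)(m-2)$. Because hypothesis~\ref{it:K4modif-b} treats the three classes of a triad symmetrically, the same averaging applied to each of the three classes yields the analogous lower bound ``from all three sides'', which in particular forces most of the common neighbourhoods that arise one level up in the construction of a tetrahedron to have size $>m/2$ as well.

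\emph{Finding the tetrahedron in $\cR$.} Build the four indices greedily. Pick any $i_1i_2i_3\in\cR$. The three pair-links $\{k:i_1i_2k\in\cR\}$, $\{k:i_1i_3k\in\cR\}$, $\{k:i_2i_3k\in\cR\}$ each have size $>(1/2+\eps/2)(m-2)$, so any two of them intersect, and choosing $i_4$ in such an intersection already produces a $K_4^{(3)-}$; one then aims to force the remaining triple $i_2i_3i_4$ in by iterating the same pigeonhole, using that the relevant common neighbourhoods are themselves $>m/2$. The content is exactly that two sets of size $>m/2$ meet; for the construction of Example~\ref{VR-graph}, which has density precisely $1/2$, the corresponding sets have size just below $m/2$ and can be chosen disjoint, so no tetrahedron appears --- this is what pins the threshold at $1/2$.

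\emph{The main obstacle.} The difficulty is that the six conditions defining a tetrahedron overlap heavily: each of the six clusters $\cP^{i_ai_b}$ lies in two of the four faces, so the corresponding ``bad'' events are far from independent and a naive union bound is hopeless. A direct attack --- fixing two ``opposite'' clusters $P^{12},P^{34}$ first and then trying to complete the remaining rainbow $4$-cycle --- provably fails: one can exhibit four bipartite graphs of density $>1/2$ placed on the edges of a $4$-cycle that admit no transversal $4$-cycle. Overcoming this requires committing to a carefully chosen order in which the six vertices are selected and, because hypothesis~\ref{it:K4modif-b} only controls \emph{most} vertices of \emph{most} triads (so that the clean codegree statement for $\cR$ already discards information one needs), retaining the bipartite link structure of $\cA$ throughout rather than collapsing to $\cR$; one most likely also passes, via Ramsey's theorem for $3$-uniform hypergraphs applied to a suitable colouring of $[m]$, to a large sub-collection of indices on which the configuration is homogeneous, and argues that there it either contains the tetrahedron outright or is forced to coincide with the $2$-colouring construction of Example~\ref{VR-graph} --- which is impossible, as that construction has density exactly $1/2<1/2+\eps$. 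Making this dichotomy precise, and in particular interlocking the link-level argument inside the triads with the index-level pigeonhole on $[m]$, is where essentially all the work lies.
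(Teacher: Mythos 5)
Your random reduction to an ordinary $3$-uniform hypergraph $\cR$ on $[m]$ with minimum codegree at least $(1/2+\eps/2)(m-2)$ is sound as far as it goes, but it discards exactly the information that the lemma needs: the resulting statement, that every large $3$-uniform hypergraph with minimum codegree above $(1/2+\eps)n$ contains a tetrahedron, is the codegree Tur\'an problem for $K_4^{(3)}$, which is not known to have threshold $1/2$. Your greedy step runs straight into the standard difficulty there: after fixing $i_1i_2i_3\in\cR$ and choosing $i_4$ in the intersection of two of the three links, the missing face $i_2i_3i_4$ is simply not controlled, and ``iterating the same pigeonhole'' is unavailable because $i_4$ has already been committed. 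No sequence of such moves forces the fourth hyperedge from a codegree bound alone. You acknowledge this, but the replacement you sketch --- retaining the link structure, applying Ramsey for $3$-uniform hypergraphs to a colouring of $[m]$, and arguing a stability dichotomy against Example~\ref{VR-graph} --- is not in executable form, and it is not the mechanism the paper uses.

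The paper's proof keeps $\cA$ intact and replaces your Ramsey/stability idea by a pigeonhole on \emph{hole profiles}: for each index $i\in[3,m]$ it records the set $\Xi_i$ of integer triples $(p,q,r)$ with $1\le p,q,r\le\delta^{-1}$ for which $\cA^{12i}$ has a $(p,q,r)$-hole (independent sets of relative sizes at least $p\delta$, $q\delta$, $r\delta$ across its three classes), and uses $m\approx 2^{1/\delta^3}$ to find $i_3,i_4$ with $\Xi_{i_3}=\Xi_{i_4}$. It then chooses a pair $(P^{14},P^{24})$ of \emph{maximum} codegree among typical pairs in $\cP^{14}\times\cP^{24}$, and a $P^{34}$ making both $N(P^{14},P^{34})$ and $N(P^{24},P^{34})$ large. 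If the three common neighbourhoods spanned no hyperedge of $\cA^{123}$, they would form a $(p,q,r)$-hole in $\cA^{123}$; by $\Xi_3=\Xi_4$ the same profile occurs as a hole $J^{12},J^{14},J^{24}$ in $\cA^{124}$, and double counting the degree of a typical $Q^{24}\in J^{24}$ against the maximality of $\codeg(P^{14},P^{24})$ yields $\tfrac12+\eps-2\delta\le y(1-x)+(1-y)x$ with $x=p\delta\ge1/2$ and $y=|J^{14}|/|\cP^{14}|\ge1/2$, which is impossible. That maximality is taken over the whole class $\cP^{14}\times\cP^{24}$, not over a single sampled representative, which is precisely the leverage your collapse to $\cR$ gives up; this, together with the profile pigeonhole, is the missing content of the argument.
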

\begin{proof}
For a given  $\eps>0$ we set
\[
\delta=\frac{\eps}{4} \qqand m=3+2^{1/\delta^3}\,.
\]
We will find the desired configuration with $i_1=1$ and $i_2=2$. 
The argument splits into three steps. In the first step 
we select the indices $i_3$ and $i_4$. 
Then, we select the six vertices  $P^{i_ai_b}$ with $1\leq a<b\leq 4$. In the second step we
shall fix the three vertices $P^{i_ai_4}$ with $a=1,2,3$ and in the third step
we fix the remaining three vertices.

\smallskip
\noindent
\textbf{Step 1: Selecting $i_3$ and $i_4$.} 
We commence by assigning a colour to each integer between~$3$ and $m$, the idea being that the colour of an index $i\in[3,m]$ encodes the sizes of holes (independent sets) in $\cA^{12i}$. 
More precisely, for positive integers $p$, $q$, and $r$ we say, 
that~$\cA^{12i}$ has a \emph{$(p,q, r)$-hole} if there are three sets $I^{12}\subseteq \cP^{12}$, \
$I^{1i}\subseteq \cP^{1i}$, and $I^{2i}\subseteq \cP^{2i}$ 
with 
\[
	|I^{12}|\ge p\cdot\delta\,|\cP^{12}|\,,\quad
	|I^{1i}|\ge q\cdot\delta\,|\cP^{1i}|\,, \qand 
	|I^{2i}|\ge r\cdot\delta\,|\cP^{2i}|
\]
such that $I^{12}\cup I^{1i}\cup I^{2i}$ is independent in $\cA^{12i}$. 
Evidently, such a hole can only exist if $p,q, r\le \delta^{-1}$. 

Let $\Xi_i$ be the set of all integer triples $(p,q,r)\in\bigl[1, \delta^{-1}\bigr]^3$ 
for which $\cA^{12i}$ contains a $(p,q,r)$-hole.
We think of the set $\Xi_i$ as a colour that has been attributed to the index~$i\in[3,m]$
and obviously there are at most $2^{1/\delta^3}$ possible colours.
Since $m-2$ exceeds the number of possible colours, the pigeonhole principle tells us that there exist two distinct integers $i_3$ and~$i_4$ between $3$~and~$m$ of the same colour. For the rest of the proof
only the parts of $\cA$ accessible via the indices $1$, $2$, $i_3$, and $i_4$ are relevant
and so without loss of generality we may henceforth assume $i_3=3$, $i_4=4$, and 
\begin{equation}\label{eq:Xi}
	\Xi_3=\Xi_4\,.
\end{equation}

\smallskip
\noindent
\textbf{Step 2: Choosing $P^{14}$, $P^{24}$, and $P^{34}$.} 
For any three distinct indices $i$, $j$, and $k$ we denote the set of all vertices from $\cP^{ij}$ whose degree in $\cA^{ijk}$ is less than $\bl\tfrac12+\eps\br|\cP^{ik}|\,|\cP^{jk}|$ 
by $X_k^{ij}$. In view of assumption~\ref{it:K4modif-b} of Lemma~\ref{K4modif}
we have 
\begin{equation}\label{eq:X}
	|X^{ij}_k|\leq\delta\,|\cP^{ij}|\,.
\end{equation}
 Given two vertices $P$ and $P'$ of $\cA$, 
we write $\codeg(P, P')$ for their \emph{codegree}, i.e., for the size $|N(P,P')|$ of their 
\emph{common neighbourhood} 
\[
	N(P,P')=\bigl\{P''\in V(\cA)\colon PP'P''\in E(\cA)\bigr\}\,.
\]
It follows from the structure of $\cA$ that $N(P,P')$ is not empty only when $P\in\cP^{ij}$ and 
$P'\in \cP^{ik}$ for some distinct indices $i$, $j,$ and $k$ and in this case we have  
 $N(P,P')\subseteq \cP^{jk}$. 

We fix $P^{14}$ and $P^{24}$ by selecting a pair 
\[
	(P^{14},P^{24})\in (\cP^{14}\setminus X^{14}_3)\times (\cP^{24}\setminus X^{24}_3)
\]
with maximum codegree in $\cA$. Let $p$ be the largest integer such that 
\begin{equation}\label{eq:1424a}
	\codeg(P^{14},P^{24})\geq p\cdot \delta|\cP^{12}|\,.
\end{equation}
It follows from assumption~\ref{it:K4modif-b} that the average codegree among all pairs 
in $\cP^{14}\times \cP^{24}$ is at least 
\[
	(1-\delta)(1/2+\eps)|\cP^{12}|
\] 
and, since $X^{14}_3$ and $X^{24}_3$ are small, a similar lower bound holds for the average (and hence the maximum) codegree in $(\cP^{14}\setminus X^{14}_3)\times (\cP^{24}\setminus X^{24}_3)$. In fact, the number of hyperedges 
in $\cA^{124}$ avoiding vertices in $X^{14}_3$ and $X^{24}_3$ is at least
\begin{multline*}
	(1/2+\eps)|\cP^{12}|\cdot (1-\delta)|\cP^{14}||\cP^{24}|
		-|\cP^{12}||X^{14}_3||\cP^{24}|-|\cP^{12}||\cP^{14}||X^{24}_3|\\
	\geq
	\big((1/2+\eps)(1-\delta)-2\delta\big)|\cP^{12}||\cP^{14}||\cP^{24}|\,.
\end{multline*}
Note that we may assume that $\eps\leq 1/2$, as otherwise the lemma is void.
Consequently, the average codegree of the pairs in 
$(\cP^{14}\setminus X^{14}_3)\times (\cP^{24}\setminus X^{24}_3)$ is at least $(1/2+\eps-3\delta)|\cP^{12}|$.
Owing to the maximal choice of~$p$ in~\eqref{eq:1424a}, this yields
\begin{equation}\label{eq:1424b}
	p\,\delta\geq \tfrac{1}{2}+\eps-4\delta\geq \tfrac{1}{2}\,.
\end{equation}

Having selected $P^{14}$ and $P^{24}$ we now select $P^{34}$. 
Due to $P^{14}\not\in X^{14}_3$ and $P^{24}\not\in X^{24}_3$ we have
\[
	\sum_{P\in \cP^{34}}\codeg(P^{14},P)\ge(\tfrac12+\eps)|\cP^{13}|\,|\cP^{34}|
\]
as well as 
\[ 
	\sum_{P\in \cP^{34}}\codeg(P^{24},P)\ge(\tfrac12+\eps)|\cP^{23}|\,|\cP^{34}|\,,
\]
whence
\[
\sum_{P\in \cP^{34}}\left(\frac{\codeg(P^{14},P)}{|\cP^{13}|}
	+\frac{\codeg(P^{24},P)}{|\cP^{23}|}\right)\ge(1+2\eps)|\cP^{34}|\,.
\]
For this reason, we may choose a vertex $P^{34}\in \cP^{34}$ 
in such a way that 
\begin{equation}\label{eq:34a}
	\frac{\codeg(P^{14},P^{34})}{|\cP^{13}|}+\frac{\codeg(P^{24},P^{34})}{|\cP^{23}|}\ge 1+2\eps\,.
\end{equation}
Because of $N(P^{14},P^{34})\subseteq \cP^{13}$ and $N(P^{24},P^{34})\subseteq \cP^{23}$ it follows that 
\begin{equation}\label{eq:34b}
	|N(P^{14},P^{34})|\ge 2\,\eps\,|\cP^{13}|\qand
	|N(P^{24},P^{34})|\ge 2\,\eps\,|\cP^{23}|\,.
\end{equation}
This concludes the selection of $P^{14}$, $P^{24}$, and $P^{34}$, which by~\eqref{eq:1424a}, 
\eqref{eq:1424b}, and \eqref{eq:34b} guarantees that all three possible 
codegrees of these vertices are reasonable large. It is left to select~$P^{12}$,~$P^{13}$, and $P^{23}$. These three vertices have to form a hyperedge in 
$\cA$ and each of them must be chosen from the common neighbourhood of two vertices chosen already, i.e.,
we have to make sure that there is a hyperedge  $P^{12}P^{13}P^{23}$ of $\cA$ with 
$P^{12}\in N(P^{14},P^{24})$, $P^{13}\in N(P^{14},P^{34})$, and $P^{23}\in N(P^{24},P^{34})$.
This is the content of the last step.

\smallskip
\noindent
\textbf{Step 3: Choosing $P^{12}$, $P^{13}$, and $P^{23}$.} 
As mentioned above it suffices to find a hyperedge with each vertex coming from 
one of the common neighbourhoods
\[
	I^{12} = N(P^{14},P^{24})\,,\quad
	I^{13} = N(P^{14},P^{34})\,,
	\qand 
	I^{23} = N(P^{24},P^{34})\,,
\] 
since this would give rise to a choice of $P^{12}$, $P^{13}$, and $P^{23}$ with the desired properties.

Suppose to the contrary that $\cA^{123}[I^{12}, I^{13},I^{23}]$ is independent, i.e., 
it gives rise to a $(p,q,r)$-hole in $\cA^{123}$, where $q$ and $r$ are the largest integers such that
\[
	|I^{13}|\geq q\cdot\delta|\cP^{13}|
	\qqand
	|I^{23}|\geq r\cdot\delta|\cP^{23}|
\] 
(for the definition of $p$ see~\eqref{eq:1424a}). Owing to~\eqref{eq:34a}  and the maximality 
of $q$ and $r$ we have
\[
	(q+1)\delta+(r+1)\delta> 1+2\eps\,.
\] 
Since $q\delta$ and $r\delta$ are bounded by $1$ we also have
\begin{equation}\label{eq:lbr}
	q\delta> 2(\eps-\delta)
	\qqand
	r\delta> 2(\eps-\delta)\,.
\end{equation}
Without loss of generality we may assume $q\geq r$ and since $\delta<\eps$ we then have
\begin{equation}\label{eq:lbq}
	q\delta> 1/2\,.
\end{equation}

Since the sets $I^{12}$, $I^{13}$, and $I^{23}$ form a $(p,q,r)$-hole in $\cA^{123}$, we have
 $(p,q,r)\in\Xi_3$ and owing 
to~\eqref{eq:Xi} we know that $(p,q,r)$ is also in $\Xi_4$, i.e., 
$\cA^{124}$ also contains a $(p,q,r)$-hole. This gives rise to an independent set in $\cA^{124}$ formed by
$J^{12}\subseteq \cP^{12}$, $J^{14}\subseteq \cP^{14}$, and $J^{24}\subseteq \cP^{24}$ with 
\[
	|J^{12}|\geq p\cdot\delta|\cP^{12}|\,,\qquad
	|J^{14}|\geq q\cdot\delta|\cP^{14}|\,,
	\qqand
	|J^{24}|\geq r\cdot\delta|\cP^{24}|\,.
\]
We will
use the fact that the chosen pair $(P^{14},P^{24})$ maximises the codegree in $\cA^{124}$
over all pairs in $(\cP^{14}\setminus X^{14}_3)\times (\cP^{24}\setminus X^{24}_3)$.
Owing to the maximal choice of $p$ in~\eqref{eq:1424a} we have 
\[
	\codeg(P^{14},P^{24})<(p+1)\cdot \delta|\cP^{12}|\,.
\] 
We consider the set
\[
	J^{24}_0=J^{24}\setminus (X^{24}_3\cup X^{24}_1)\,.
\]
We will arrive at a contradiction by considering a vertex from $J^{24}_0$.
It follows from~\eqref{eq:lbr} and~\eqref{eq:X} that
\[
	|J^{24}_0|\geq |J^{24}|-|X^{24}_3|-|X^{24}_1|\geq 2(\eps-2\delta)|\cP^{24}|>0\,.
\]
Therefore, there exists some vertex $Q^{24}\in J^{24}_0$, which we fix for the rest of the proof. We estimate the degree $\deg(Q^{24})$ 
of $Q^{24}$ in $\cA^{124}$ in two ways. Since $Q^{24}\not\in X^{24}_1$ we have
\begin{equation}\label{eq:lb}
	\deg(Q^{24})\geq (1/2+\eps)|\cP^{12}||\cP^{14}|\,.
\end{equation}
On the other hand, we write $\deg(Q^{24})$ as the sum of all codegrees of $Q^{24}$ with a vertex from~$\cP^{14}$, i.e.,
\[
	\deg(Q^{24})=\sum_{Q\in\cP^{14}}\codeg(Q,Q^{24})
\]
and consider three cases depending on $Q$. If $Q\in J^{14}$, then all common neighbours of 
$Q^{24}$ and~$Q$ must lie outside $J^{12}$, as $J^{12}$, $J^{14}$, and $J^{24}$ form a hole 
in $\cA^{124}$. In particular, in this case we have
\[
	\codeg(Q,Q^{24})\leq |\cP^{12}|-|J^{12}|\leq (1-p\delta)|\cP^{12}|\,.
\]

For the second case, we consider $Q\in X^{14}_3$ and we use the trivial upper bound 
\[
	\codeg(Q,Q^{24})\leq |\cP^{12}|\,.
\]
However, due to~\eqref{eq:X} we know that this will only contribute little to $\deg(Q^{24})$.

In the remaining case we have $Q\not\in J^{14}$ and $Q\not\in X^{14}_3$. Then we have
\[(Q,Q^{24})\in (\cP^{14}\setminus X^{14}_3)\times (\cP^{24}\setminus X^{24}_3)\] and by the maximal choice of 
$(P^{14},P^{24})$ we infer
\[
	\codeg(Q,Q^{24})\leq \codeg(P^{14},P^{24}) < (p+1)\delta|\cP^{12}|\,.
\]

Putting the three cases together, we obtain
\[
	\deg(Q^{24}) \leq |J^{14}|\cdot (1-p\delta)|\cP^{12}| 
		+ |X^{14}_3|\cdot |\cP^{12}| 
		+ (|\cP^{14}|-|J^{14}|)\cdot(p+1)\delta|\cP^{12}|
\]
Let $x$, $y\in\RR$ be given by $x=p\delta$ and 
$y=|J^{14}|/|\cP^{14}|$. Recalling~\eqref{eq:1424b} and~\eqref{eq:lbq} we note that
$x$, $y\geq 1/2$ and we can rewrite the last inequality as
\[
	\frac{\deg(Q^{24})}{|\cP^{12}||\cP^{14}|}
	\leq 
	y(1-x) + \delta + (1-y)x+\delta\,.
\]
Comparing this with~\eqref{eq:lb} we arrive at
\[
	\frac{1}{2}+\eps-2\delta \leq y(1-x)+(1-y)x\,,
\]
which due to $x$, $y\geq 1/2$ leads to the contradiction
\[
	0\leq\tfrac{1}{2}(2x-1)(2y-1)\leq 2\delta-\eps<0
\]
and concludes the proof of Lemma~\ref{K4modif}.
\end{proof}

The proof of the main result follows by a combined application of the regularity method 
for hypergraphs and Lemma~\ref{K4modif}.

\begin{proof}[Proof of Theorem~\ref{K4lp}]
Given $\eps>0$ we have to find appropriate $\eta>0$ and $n_0\in\NN$. For this purpose we start by choosing some 
auxiliary constants $\delta_3$, $d_3$, $\delta$, and $m$ obeying the hierarchy 
\begin{equation}\label{eq:hier}
\delta_3\ll d_3, \delta, m^{-1}\ll\eps\,.
\end{equation}
For these choices of $\delta_3$ and $d_3$ and $F=K_4^{(3)}$ we appeal to Theorem~\ref{thm:EL} and 
obtain $\delta_2\colon\NN\to\NN$ and $N\colon \NN\to\NN$. Without loss of generality we may assume 
that for all $\ell\in\NN$ we have
\[
\delta_2(\ell)\ll\ell^{-1}, \eps\,.
\]
Applying Theorem~\ref{thm:TuRL} to $d_3$, $\delta_3$, $m$, and $\delta_2$ we 
get two integers $T_0$ and $n'_0$. Now we claim that any 
\[
\eta\ll T_0^{-1} \qquad \text{ and } \qquad n_0\gg \max\{T_0\cdot N(T_0), n'_0\}
\]
are as desired. 

To justify this, we let any $(1/2+\eps, \eta, \EV)$-quasirandom hypergraph $H$ on $n\ge n_0$ vertices be given. Since $n\ge n'_0$ holds as well, we may apply Theorem~\ref{thm:TuRL}, thus getting a subhypergraph $\hat H\subseteq H$ with vertex partition $\hat V=V_1\dcup\dots\dcup V_m$ and edge partitions
$\cP^{ij}=\{P^{ij}_{\alpha}\colon \alpha\in[\l]\}$ of $K(V_i,V_j)$ for $1\le i<j\le m$.      

In view of the embedding lemma (Theorem~\ref{thm:EL}) the task that remains to be done is now reduced to the task of locating four vertex classes
$V_{i_1},\dots,V_{i_4}$ with $i_1< i_2< i_3<i_4$ and six bipartite graphs $P^{ab}\in \cP^{i_ai_b}$ for $1\le a<b\le4$
from the regular partition, such that all triads 
\[
	P^{abc}=P^{ab}\dcup P^{ac}\dcup P^{bc}
\] 
with $1\le a<b<c\le4$
are \emph{dense} and \emph{regular}, i.e., $d(H|P^{abc})\geq d_3$ and $H$ is $\delta_3$-regular w.r.t.~$P^{abc}$. For this purpose we reformulate our current situation in terms of a ``reduced hypergraph'' $\cA$. 

The reduced hypergraph $\cA$ is going to be a $3$-uniform and $\binom{m}{2}$-partite hypergraph with vertex classes~$\cP^{ij}$ of size $\l$ for $1\le i<j\le m$. Among its $\binom{\binom{m}{2}}{3}$ naturally induced tripartite $3$-uniform subhypergraphs only $\binom{m}{3}$ many are inhabited by hyperedges: these are the hypergraphs $\cA^{ijk}$ with vertex classes~$\cP^{ij}$, $\cP^{ik}$, and $\cP^{jk}$ for $1\le i<j<k\le m$. They are defined to have precisely those hyperedges 
$P^{ij}P^{ik}P^{jk}$ with
$P^{ij}\in\cP^{ij}$, $P^{ik}\in\cP^{ik}$, and $P^{jk}\in\cP^{jk}$ for which the triad
\[
(V_i\dcup V_j\dcup V_k, P^{ij}\dcup P^{ik}\dcup P^{jk})
\]
has $\hat H$-density at least $d_3$. To see that Lemma~\ref{K4modif} is applicable (with $\eps/2$ instead of $\eps$), 
it is enough to verify that given $1\le i<j<k\le m$ the following is true. There are at most~$\delta\,|\cP^{ij}|$ vertices $P^{ij}\in \cP^{ij}$ whose degree in $\cA^{ijk}$ is smaller than $(1/2+\eps/2)\ell^2$, and similarly for $\cP^{ik}$ and~$\cP^{jk}$.     

Since the proofs of all three of these statements are the same, we just deal with $\cP^{ij}$ in the sequel. Let $X^{ij}_{k}$ denote the set of all those $P^{ij}\in\cP^{ij}$ for which there are more than~$\delta\,\ell^2$ pairs $(P^{ik}, P^{jk})\in\cP^{ik}\times\cP^{jk}$ such that $H$ is $\delta_3$-irregular with respect to the triad $\bl V_i\dcup V_j\dcup V_k, P^{ij}\dcup P^{ik}\dcup P^{jk}\br$. Consequently the total number of triads involving~$V_i$,~$V_j$, and $V_k$ with respect to which $H$ is $\delta_3$-irregular is on the one hand at least $\delta\,\ell^2\,|X^{ij}_{k}|$. On the other hand it is at most $\delta_3\ell^3$ by clause \ref{TuRL:4} of Theorem~\ref{thm:TuRL}. Consequently we have $|X^{ij}_k|\le\delta_3\,\ell/\delta\leq \delta\l$ (by the hierarchy given in~\eqref{eq:hier}).  It suffices to check that any $P^{ij}\in\cP^{ij}\setminus X^{ij}_k$ belongs to at least $(1/2+\eps/2)\ell^2$ hyperedges of $\cA^{ijk}$.

To verify this, we fix any $P^{ij}\in \cP^{ij}\setminus X^{ij}_{k}$ for the remainder of the argument. 
Let us apply the $(1/2+\eps, \eta, \EV)$-quasirandomness of $H$ to $V_k$ and the set of pairs
\[
	Q^{ij}=\{(x,y)\in V_i\times V_j\colon xy\in E(P^{ij})\}
\]
in the r\^ole of $X$ and $P$ of Definition~\ref{lpqr}. Concerning the number $e_{\ev}(V_k,Q^{ij})$ of pairs
$(v,(x,y))\in V_k\times Q^{ij}$ with $vxy\in E(H)$ this tells us 
\[
e_{\ev}(V_k,Q^{ij})\ge \bl\tfrac12+\eps\br |V_k|\,|E(P^{ij})|-\eta \cdot n^3\,.
\]
Set $M=|V_i|=|V_j|=|V_k|$. Since 
\[
|E(P^{ij})|\ge \left(\frac1\ell-\delta_2(\ell)\right)M^2\]
follows from \ref{TuRL:2}, we get
\[
e_{\ev}(V_k,Q^{ij})\ge \left(\frac12+\eps\right)\left(\frac1\ell-\delta_2(\ell)\right)M^3-\eta\,n^3\,.
\]
As we have $M\ge \tfrac{n}{2T_0}$ by \ref{TuRL:1}, 
the hierarchy imposed on $\eta$ leads to  
\[
e_{\ev}(V_k,Q^{ij})\ge \left(\frac 12+\frac{9\eps}{10}\right)\cdot\frac{M^3}{\ell}\,.
\]
On the other hand, we have
\begin{equation}\label{T-sum}
	e_{\ev}(V_k,Q^{ij})=\sum_{(P^{ik},P^{jk})\in \cP^{ik}\times\cP^{jk}}\big|E(H)\cap\cK_3(P^{ij}\cup P^{ik}\cup P^{jk})\big|\,.
\end{equation}
The terms corresponding to triads $\bl V_i\dcup V_j\dcup V_k, P^{ij}\dcup P^{ik}\dcup P^{jk}\br$ with respect to which~$H$ has density at most $d_3$ contribute at most 
$d_3(\ell^{-3}+3\delta_2(\ell))M^3\ell^2$ (see~\eqref{eq:TCL})
 and by $\delta_2(\ell)\ll\ell^{-1}$ this is at most $2d_3M^3/\ell$. 

Further, by $P^{ij}\not\in X^{ij}_k$ there are at most $\delta\,\ell^2$ terms on the right hand side of \eqref{T-sum} corresponding to $\delta_3$-irregular triads, and each of them contributes, for the same reason as above, at most $\tfrac {2M^3}{\ell^3}$ to the right hand side of \eqref{T-sum}.
 
The remaining terms from \eqref{T-sum} satisfy $P^{ij}P^{ik}P^{jk}\in E(\cA^{ijk})$ and 
each of them contributes at most $\bl 1+\tfrac\eps 5\br\cdot\tfrac{M^3}{\ell^3}$. So if $\deg(P^{ij})$ denotes the degree of $P^{ij}$ in $\cA^{ijk}$ we arrive at
\[
e_{\ev}(V_k,Q^{ij})\le \left(\left(1+\frac\eps 5\right)\frac{\deg(P^{ij})}{\ell^2}+2d_3+2\delta\right)\cdot\frac{M^3}\ell\,.
\]
Comparing both estimates for $e_{\ev}(V_k,Q^{ij})$ we deduce
\[
\frac 12+\frac{9\eps}{10}\le \left(1+\frac\eps 5\right)\frac{\deg(P^{ij})}{\ell^2}+2d_3+2\delta\,.
\]
We may assume $d_3, \delta\le\tfrac \eps{40}$, thus getting
\[
\frac12+\frac{4\eps}{5}\le \left(1+\frac\eps 5\right)\frac{\deg(P^{ij})}{\ell^2}\,
\]
and by $\eps\ll 1$ this implies 
\[
\frac{1+\eps}2\cdot\ell^2\le \deg(P^{ij})\,,
\]  
as desired. Consequently, $\cA$ satisfies the assumptions of Lemma~\ref{K4modif} (with $\eps/2$ in place of~$\eps)$
and Theorem~\ref{K4lp} follows by an application of the embedding lemma  (Theorem~\ref{thm:EL}).
\end{proof}

\section{Concluding remarks}
\label{sec:conc}
Continuing the discussion from the introduction we mention related concepts 
of quasi\-random hypergraphs. In fact, for $3$-uniform hypergraphs 
one can define for any antichain $\ccA\neq\{\{1,2,3\}\}$ from the power set of $\{1,2,3\}$
a notion of $\ccA$-quasirandom hypergraphs (see, e.g.,~\cites{CHPS,Tow}) and
these concepts differ for non-isomorphic antichains.
Having this 
in mind, three more concepts of quasirandom hypergraphs arise, in addition to the four notions defined in 
Section~\ref{sec:intro}.  
In view of our earlier notation, we depict these three new concepts by $\VDEG$, $\VV$,
and $\EPAIR$.

\begin{dfn}
	A $3$-uniform hypergraph $H=(V, E)$ on $n=|V|$ vertices is
	\begin{enumerate}[label=\rmlabel]
	\item \emph{$(d, \eta,\VDEG)$-quasirandom}
		if for any subset of vertices $X\subseteq V$ the number 
		$e_{\vdeg}(X)$ of triples $(x,v,v')\in X\times V\times V$ with  $xvv'\in E$ satisfies
		\[
			\big|e_{\vdeg}(X)-d\,|X|\,n^2\big|\leq \eta\,n^3\,.
		\]
	\item \emph{$(d, \eta,\VV)$-quasirandom}
		if for any subsets of vertices $Y$, $Z\subseteq V$ the number 
		$e_{\vv}(Y,Z)$ of triples $(v,y,z)\in V\times Y\times Z$ with  $vyz\in E$ satisfies
		\[
			\big|e_{\vv}(Y,Z)-d\,|Y|\,|Z|\,n\big|\leq \eta\,n^3\,.
		\]
	\item \emph{$(d, \eta,\EPAIR)$-quasirandom}
		if for any subset $P\subseteq V\times V$ the number 
		$e_{\epair}(P)$ of triples $(v,y,z)\in V\times P$ with  $vyz\in E$ satisfies
		\[
			\big|e_{\epair}(P)-d\,|P|\,n\big|\leq \eta\,n^3\,.
		\]
	\end{enumerate}
\end{dfn}
With these definitions at hand we may extend the notions $\cQ(d,\eta,\star)$ and $\pi_{\star}$ to any symbol
$\star\in\{\VDEG, \VV, \EPAIR\}$ (see Definition~\ref{def:pi} and the paragraph before). It	 follows directly from the definitions that for any $d\in[0,1]$ and $\eta>0$ we have 
\begin{equation}\label{eq:Qs}
	\cQ(d,\eta,\EV)\subseteq 
	\cQ(d,\eta,\EPAIR)\subseteq
	\cQ(d,\eta,\VV)\subseteq
	\cQ(d,\eta,\VDEG)
	\qand
	\cQ(d,\eta,\VVV)\subseteq\cQ(d,\eta,\VV)\,.
\end{equation}
However, there exist examples of hypergraphs that show 
that  $\cQ(d,\eta,\VVV)$ and $\cQ(d,\eta,\EPAIR)$ are incomparable in general.
Consequently, we can extend \eqref{qr-order} for every hypergraph $F$ to
\begin{align*}
	\pi(F)\ge \pivdeg(F)\ge\pivv(F)&\ge\piepair(F)\ge\pilp(F)\ge\pill(F)\ge\pilll(F)=0\\
\intertext{and}
	\pivv(F)&\ge\pivvv(F)\ge\pilp(F)\,.
\end{align*}

Note that in the hierachy given in~\eqref{eq:Qs} the weakest concept is $\VDEG$-quasirandomness.
It follows from its definition, that any $3$-uniform $(d,\eta,\VDEG)$-quasirandom hypergraph $H=(V,E)$ 
on~$n$ vertices has the property, that all but at most $2\sqrt{\eta} n$ vertices have its 
degree in the interval $d\,n^2/2\pm\sqrt\eta\, n^2$. In fact, $\VDEG$-quasirandom hypergraphs 
are the class of hypergraphs with  approximately regular degree sequence. Owing to this, it is not hard to 
show that 
\[
	\pivdeg(F)=\pi(F)
\]
for any hypergraph $F$.

We conclude our discussion with an overview of the Tur\'an densities of the tetrahedron. For that 
the well known Tur\'an conjecture asserts 
\[
	\pi(K^{(4)}_3)=\tfrac{5}{9}
\] 
and Theorem~\ref{K4lp} and a result from~\cite{RRS-d} yield
\[
	\piev(K^{(4)}_3)=\tfrac{1}{2}\qqand\piee(K^{(4)}_3)=0\,.
\]
Moreover, Example~\ref{VR-graph} implies 
\[
	\pivvv(K^{(4)}_3)\geq \tfrac{1}{2}
	\qqand
	\piepair(K^{(4)}_3)\geq \tfrac{1}{2}
\] 
and it is tempting to conjecture for both cases that a matching upper bound holds.
Maybe an interesting first step in that direction is to combine 
both incomparable assumptions given by $\VVV$- and $\EPAIR$-quasirandomness.
\begin{quest}
	Is it true that for every $\eps>0$ there exist $\eta>0$ such that every 
	sufficiently large hypergraph 
	\[
		H\in\cQ(1/2+\eps,\eta,\VVV)\cap\cQ(1/2+\eps,\eta,\EPAIR)
	\]
	contains a tetrahedron?
\end{quest}
In view of~\eqref{hierarchy} and~\eqref{eq:Qs} a 
positive resolution of this question would strengthen our main result Theorem~\ref{K4lp}.

Finally, concerning $\VV$-quasirandomness Example~\ref{VR-graph} shows that $\pivv(K^{(3)}_4)\geq 1/2$
and the validity of Tur\'an's  conjecture would imply $\pivv(K^{(3)}_4)\leq 5/9$.

\subsection*{Acknowledgement} We thank the referees for their helpful remarks.

\begin{bibdiv}
\begin{biblist}

\bib{CGW89}{article}{
   author={Chung, F. R. K.},
   author={Graham, R. L.},
   author={Wilson, R. M.},
   title={Quasi-random graphs},
   journal={Combinatorica},
   volume={9},
   date={1989},
   number={4},
   pages={345--362},
   issn={0209-9683},
   review={\MR{1054011 (91e:05074)}},
   doi={10.1007/BF02125347},
}

\bib{CHPS}{misc}{
   author={Conlon, David},
   author={H{\`a}n, Hi{\^e}p},
   author={Person, Yury},
   author={Schacht, Mathias},
   note={personal communication},
}

\bib{Er77}{article}{
   author={Erd{\H{o}}s, Paul},
   title={Paul Tur\'an, 1910--1976: his work in graph theory},
   journal={J. Graph Theory},
   volume={1},
   date={1977},
   number={2},
   pages={97--101},
   issn={0364-9024},
   review={\MR{0441657 (56 \#61)}},
}

\bib{Er90}{article}{
   author={Erd{\H{o}}s, Paul},
   title={Problems and results on graphs and hypergraphs: similarities and
   differences},
   conference={
      title={Mathematics of Ramsey theory},
   },
   book={
      series={Algorithms Combin.},
      volume={5},
      publisher={Springer, Berlin},
   },
   date={1990},
   pages={12--28},
   review={\MR{1083590}},
}

\bib{ErSt46}{article}{
   author={Erd{\H{o}}s, P.},
   author={Stone, A. H.},
   title={On the structure of linear graphs},
   journal={Bull. Amer. Math. Soc.},
   volume={52},
   date={1946},
   pages={1087--1091},
   issn={0002-9904},
   review={\MR{0018807 (8,333b)}},
}

\bib{FR}{article}{
   author={Frankl, Peter},
   author={R{\"o}dl, Vojt{\v{e}}ch},
   title={Extremal problems on set systems},
   journal={Random Structures Algorithms},
   volume={20},
   date={2002},
   number={2},
   pages={131--164},
   issn={1042-9832},
   review={\MR{1884430 (2002m:05192)}},
   doi={10.1002/rsa.10017.abs},
}

\bib{GKV}{article}{
   author={Glebov, Roman},
   author={Kr{\'a}{\soft{l}}, Daniel},
   author={Volec, Jan},
   title={A problem of Erd\H os and S\'os on 3-graphs},
   journal={Israel J. Math.},
   volume={211},
   date={2016},
   number={1},
   pages={349--366},
   issn={0021-2172},
   review={\MR{3474967}},
   doi={10.1007/s11856-015-1267-4},
}

\bib{Gow06}{article}{
   author={Gowers, W. T.},
   title={Quasirandomness, counting and regularity for 3-uniform
   hypergraphs},
   journal={Combin. Probab. Comput.},
   volume={15},
   date={2006},
   number={1-2},
   pages={143--184},
   issn={0963-5483},
   review={\MR{2195580 (2008b:05175)}},
   doi={10.1017/S0963548305007236},
}

\bib{Ke11}{article}{
   author={Keevash, Peter},
   title={Hypergraph Tur\'an problems},
   conference={
      title={Surveys in combinatorics 2011},
   },
   book={
      series={London Math. Soc. Lecture Note Ser.},
      volume={392},
      publisher={Cambridge Univ. Press, Cambridge},
   },
   date={2011},
   pages={83--139},
   review={\MR{2866732}},
}

\bib{KRS02}{article}{
   author={Kohayakawa, Yoshiharu},
   author={R{\"o}dl, Vojt{\v{e}}ch},
   author={Skokan, Jozef},
   title={Hypergraphs, quasi-randomness, and conditions for regularity},
   journal={J. Combin. Theory Ser. A},
   volume={97},
   date={2002},
   number={2},
   pages={307--352},
   issn={0097-3165},
   review={\MR{1883869 (2003b:05112)}},
   doi={10.1006/jcta.2001.3217},
}

\bib{NPRS09}{article}{
   author={Nagle, Brendan},
   author={Poerschke, Annika},
   author={R{\"o}dl, Vojt{\v{e}}ch},
   author={Schacht, Mathias},
   title={Hypergraph regularity and quasi-randomness},
   conference={
      title={Proceedings of the Twentieth Annual ACM-SIAM Symposium on
      Discrete Algorithms},
   },
   book={
      publisher={SIAM, Philadelphia, PA},
   },
   date={2009},
   pages={227--235},
   review={\MR{2809322}},
}

\bib{Ra07}{article}{
   author={Razborov, Alexander A.},
   title={Flag algebras},
   journal={J. Symbolic Logic},
   volume={72},
   date={2007},
   number={4},
   pages={1239--1282},
   issn={0022-4812},
   review={\MR{2371204 (2008j:03040)}},
   doi={10.2178/jsl/1203350785},
}

\bib{Ra10}{article}{
   author={Razborov, Alexander A.},
   title={On 3-hypergraphs with forbidden 4-vertex configurations},
   journal={SIAM J. Discrete Math.},
   volume={24},
   date={2010},
   number={3},
   pages={946--963},
   issn={0895-4801},
   review={\MR{2680226 (2011k:05171)}},
   doi={10.1137/090747476},
}

\bib{RRS-zero}{unpublished}{
	author={Reiher, Chr.}, 
	author={R{\"o}dl, V.},
	author={Schacht, M.},
	title={Hypergraphs with vanishing Tur\'an density in uniformly dense hypergraphs}, 
	note={Preprint},
}

\bib{RRS-a}{article}{
	author={Reiher, Chr.}, 
	author={R{\"o}dl, V.},
	author={Schacht, M.},
	title={On a Tur\'an problem in weakly quasirandom $3$-uniform hypergraphs}, 
	eprint={1602.02290},
	note={Submitted},
}

\bib{RRS-d}{article}{
	author={Reiher, Chr.}, 
	author={R{\"o}dl, V.},
	author={Schacht, M.},
	title={Some remarks on $\pill$}, 
	eprint={1602.02299},
	note={Submitted},
}

\bib{Ro86}{article}{
   author={R{\"o}dl, Vojt{\v{e}}ch},
   title={On universality of graphs with uniformly distributed edges},
   journal={Discrete Math.},
   volume={59},
   date={1986},
   number={1-2},
   pages={125--134},
   issn={0012-365X},
   review={\MR{837962 (88b:05098)}},
   doi={10.1016/0012-365X(86)90076-2},
}

\bib{RoSchRL}{article}{
   author={R{\"o}dl, Vojt{\v{e}}ch},
   author={Schacht, Mathias},
   title={Regular partitions of hypergraphs: regularity lemmas},
   journal={Combin. Probab. Comput.},
   volume={16},
   date={2007},
   number={6},
   pages={833--885},
   issn={0963-5483},
   review={\MR{2351688 (2008h:05083)}},
}
		
\bib{RoSchCL}{article}{
   author={R{\"o}dl, Vojt{\v{e}}ch},
   author={Schacht, Mathias},
   title={Regular partitions of hypergraphs: counting lemmas},
   journal={Combin. Probab. Comput.},
   volume={16},
   date={2007},
   number={6},
   pages={887--901},
   issn={0963-5483},
   review={\MR{2351689 (2008j:05238)}},
}

\bib{Th87a}{article}{
   author={Thomason, Andrew},
   title={Pseudorandom graphs},
   conference={
      title={Random graphs '85},
      address={Pozna\'n},
      date={1985},
   },
   book={
      series={North-Holland Math. Stud.},
      volume={144},
      publisher={North-Holland, Amsterdam},
   },
   date={1987},
   pages={307--331},
   review={\MR{930498 (89d:05158)}},
}

\bib{Th87b}{article}{
   author={Thomason, Andrew},
   title={Random graphs, strongly regular graphs and pseudorandom graphs},
   conference={
      title={Surveys in combinatorics 1987},
      address={New Cross},
      date={1987},
   },
   book={
      series={London Math. Soc. Lecture Note Ser.},
      volume={123},
      publisher={Cambridge Univ. Press, Cambridge},
   },
   date={1987},
   pages={173--195},
   review={\MR{905280 (88m:05072)}},
}

\bib{Tow}{article}{
	author={Towsner, Henry},
	title={Sigma-Algebras for Quasirandom Hypergraphs}, 
	eprint={1312.4882},
}

\bib{Tu41}{article}{
   author={Tur{\'a}n, Paul},
   title={Eine Extremalaufgabe aus der Graphentheorie},
   language={Hungarian, with German summary},
   journal={Mat. Fiz. Lapok},
   volume={48},
   date={1941},
   pages={436--452},
   review={\MR{0018405 (8,284j)}},
}

\end{biblist}
\end{bibdiv}

\end{document}